\def\qed{\hfill$\Box$\vspace{12pt}}
\long\def\delete#1{}
\newcommand{\bmat}[1]{\begin{bmatrix}#1\end{bmatrix}}
\newcommand{\pmat}[1]{\begin{pmatrix}#1\end{pmatrix}}
\newcommand{\be}{\begin{equation}}
\newcommand{\ee}{\end{equation}}
\newcommand{\ben}{\begin{equation*}}
\newcommand{\een}{\end{equation*}}
\newcommand{\bea}{\begin{eqnarray}}
\newcommand{\eea}{\end{eqnarray}}
\newcommand{\bean}{\begin{eqnarray*}}
\newcommand{\eean}{\end{eqnarray*}}
\def\det{{\rm det}}
\newtheorem{thm}{Theorem}[section]
\newtheorem{cor}[thm]{Corollary}
\newtheorem{defn}[thm]{Definition}
\numberwithin{equation}{section}
\title{Spectra of the subdivision-vertex and subdivision-edge coronae}
\author{Pengli Lu\; and\; Yufang Miao
\\
{\small School of Computer and Communication}\\[-0.8ex]
{\small Lanzhou University of Technology}\\[-0.8ex]
{\small Lanzhou, 730050, Gansu, P.R. China}\\
\emph{{\small \tt lupengli88@163.com, miaoyufanghappy@163.com }} }
\date{}
\begin{document}

\openup 0.5\jot
\maketitle
%\linenumbers

\begin{abstract}
The subdivision graph $\mathcal{S}(G)$ of a graph $G$ is the graph obtained by inserting a new vertex into every edge of $G$. Let $G_1$ and $G_2$ be two vertex disjoint graphs. The \emph{subdivision-vertex corona} of $G_1$ and $G_2$, denoted by $G_1\odot G_2$, is the graph obtained from $\mathcal{S}(G_1)$ and $|V(G_1)|$ copies of $G_2$, all vertex-disjoint, by joining the $i$th vertex of $V(G_1)$ to every vertex in the $i$th copy of $G_2$. The \emph{subdivision-edge corona} of $G_1$ and $G_2$, denoted by $G_1\circleddash G_2$, is the graph obtained from $\mathcal{S}(G_1)$ and $|I(G_1)|$ copies of $G_2$, all vertex-disjoint, by joining the $i$th vertex of $I(G_1)$ to every vertex in the $i$th copy of $G_2$, where $I(G_1)$ is the set of inserted vertices of $\mathcal{S}(G_1)$. In this paper we determine the adjacency spectra, the Laplacian spectra and the signless Laplacian spectra of $G_1\odot G_2$ (respectively, $G_1\circleddash G_2$) in terms of the corresponding spectra of $G_1$ and $G_2$. As applications, the results on the spectra of $G_1\odot G_2$ (respectively, $G_1\circleddash G_2$) enable us to construct infinitely many pairs of cospectral graphs. The adjacency spectra of $G_1\odot G_2$ (respectively, $G_1\circleddash G_2$) help us to construct many infinite families of integral graphs. By using the Laplacian spectra, we also obtain the number of spanning trees and Kirchhoff index of $G_1\odot G_2$ and $G_1\circleddash G_2$, respectively.

\bigskip

\noindent\textbf{Keywords:}  Spectrum, Cospectral graphs, Integral graphs, Spanning trees, Kirchhoff index, Subdivision-vertex
corona, Subdivision-edge corona

\bigskip

\noindent{{\bf AMS Subject Classification (2010):} 05C50}
\end{abstract}
%\linenumbers

\section{Introduction}

We only consider undirected and simple graphs throughout this
paper. Let $G=(V(G),E(G))$ be a graph with vertex set
$V(G)=\{v_1,v_2,\ldots,v_n\}$ and edge set $E(G)$. The
\emph{adjacency matrix} of $G$, denoted by $A(G)$, is the $n \times
n$ matrix whose $(i,j)$-entry is $1$ if $v_i$ and $v_j$ are adjacent in $G$ and $0$ otherwise. Let $d_G(v_i)$ be the degree of vertex $v_i$ in $G$. Denote $D(G)$ to be the diagonal matrix with diagonal entries $d_G(v_1),\ldots,d_G(v_n)$. The \emph{Laplacian matrix} of $G$ and the \emph{signless Laplacian matrix} of $G$ are defined as $L(G)=D(G)-A(G)$ and $Q(G)=D(G)+A(G)$, respectively. Let $\phi(A(G);x)=\det(xI_n-A(G))$, or simply $\phi(A(G))$, be the \emph{adjacency characteristic polynomial} of $G$. Similarly,  denote $\phi(L(G))$ (respectively, $\phi(Q(G))$) as the \emph{Laplacian} (respectively, \emph{signless Laplacian}) \emph{characteristic polynomial}. Denote the eigenvalues of $A(G), L(G)$ and $Q(G)$ by $\lambda_1(G)\geq\lambda_2(G)\geq\cdots\geq\lambda_n(G)$, $0=\mu_1(G)\leq\mu_2(G)\leq\cdots\leq\mu_n(G)$, $\nu_1(G)\leq\nu_2(G)\leq\cdots\leq\nu_n(G)$, respectively. The eigenvalues (together with the multiplicities) of $A(G), L(G)$ and $Q(G)$  are called the \emph{$A$-spectrum},  \emph{$L$-spectrum} and  \emph{$Q$-spectrum} of $G$, respectively. Graphs with the same $A$-spectra (respectively, $L$-spectra, $Q$-spectra) are called \emph{$A$-cospectral} (respectively, \emph{$L$-cospectral}, \emph{$Q$-cospectral}) graphs. For more terminologies not defined here, the readers can refer to  \cite{kn:Cvetkovic95,kn:Cvetkovic10,kn:Brouwer12}.

The \emph{corona} of two graphs was first introduced by \textsc{R. Frucht} and \textsc{F. Harary} in \cite{kn:Frucht70} with the goal of constructing a graph whose automorphism group is the wreath product of the two component automorphism groups. It is known that the $A$-spectra (respectively, $L$-spectra, $Q$-spectra) of the corona of any two graphs can be expressed by that of the two factor graphs  \cite{kn:Barik07,kn:Cui12,kn:McLeman11,kn:Wang12}. Similarly, the $A$-spectra (respectively, $L$-spectra, $Q$-spectra) of the \emph{edge corona} \cite{kn:Hou10} of two graphs, which is a variant of the corona operation, were completely computed in \cite{kn:Cui12,kn:Hou10,kn:Wang12}. Another variant of the corona operation, the \emph{neighbourhood corona}, was introduced in \cite{kn:Gopalapillai11} recently. In \cite{kn:Gopalapillai11}, the $A$-spectrum of the neighbourhood corona of an arbitrary graph and a regular graph was given in terms of that of the two factor graphs. The author also gave the $L$-spectrum of the neighbourhood corona of a regular graph and an arbitrary graph.

The \emph{subdivision graph} $\mathcal{S}(G)$ of a graph $G$ is the graph obtained by inserting a new vertex into every edge of $G$ \cite{kn:Cvetkovic10}. We denote the set of such new vertices by $I(G)$. In \cite{kn:Indulal12}, two new graph operations based on subdivision graphs: \emph{subdivision-vertex join} and \emph{subdivision-edge join} were introduced, and the $A$-spectra of subdivision-vertex join (respectively, subdivision-edge join) of two regular graphs were computed in terms of that of the two graphs. More work on their $L$-spectra and $Q$-spectra were presented in \cite{kn:Liu12}. Meanwhile, \textsc{X. Liu} and \textsc{P. Lu} defined two new graph operations based on subdivision graphs: \emph{subdivision-vertex neighbourhood corona} and \emph{subdivision-edge neighbourhood corona} \cite{kn:LiuLu12}. The authors determined the $A$-spectrum, the $L$-spectrum and the $Q$-spectrum of the subdivision-vertex neighbourhood corona (respectively, subdivision-edge neighbourhood corona) of a regular graph and an arbitrary graph.

Motivated by the work above, we define two new graph operations based on subdivision graphs as follows.

\begin{defn}
\label{defn1}
{\em The \emph{subdivision-vertex corona} of two vertex-disjoint graphs $G_1$ and $G_2$, denoted by
$G_1\odot G_2$, is the graph obtained from $\mathcal{S}(G_1)$ and
$|V(G_1)|$ copies of $G_2$, all vertex-disjoint, by joining the
$i$th vertex of $V(G_1)$ to every vertex in the $i$th copy of $G_2$.}
\end{defn}

\begin{defn}
\label{defn2}
{\em The \emph{subdivision-edge corona} of two vertex-disjoint graphs $G_1$ and $G_2$, denoted by
$G_1\circleddash G_2$, is the graph obtained from $\mathcal{S}(G_1)$
and $|I(G_1)|$ copies of $G_2$, all vertex-disjoint, by joining the $i$th vertex of $I(G_1)$ to every vertex in the $i$th copy of $G_2$.}
\end{defn}

Let $P_n$ denote a path on $n$ vertices. Figure \ref{gg2} depicts the subdivision-vertex corona $P_4\odot P_2$ and subdivision-edge corona $P_4\circleddash P_2$, respectively.
\begin{figure}
\centering
\vspace{-0.9cm}
\includegraphics*[height=7.8cm]{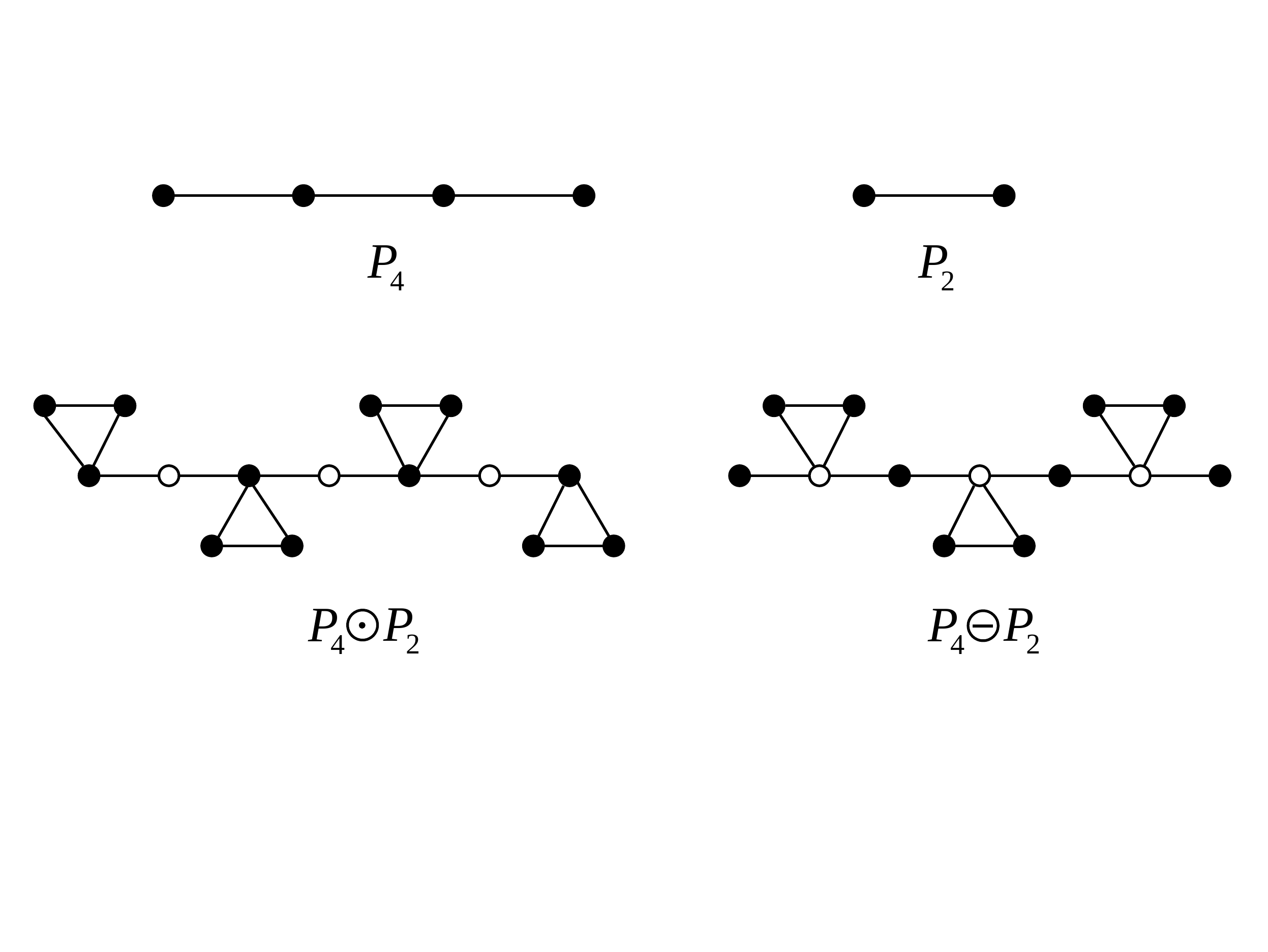}
\vspace{-2.6cm}
\caption{\small An example of subdivision-vertex and subdivision-edge coronae.}
\label{gg2}
\end{figure}
Note that if $G_1$ is a graph on $n_1$ vertices and $m_1$ edges and $G_2$ is a graph on $n_2$ vertices and $m_2$ edges, then the subdivision-vertex corona $G_1\odot G_2$ has $n_1(1+n_2)+m_1$ vertices and $2m_1+n_1(n_2+m_2)$ edges, and the subdivision-edge corona $G_1\circleddash G_2$
has $m_1(1+n_2)+n_1$ vertices and $m_1(2+n_2+m_2)$ edges.

In this paper, we will determine the $A$-spectra, the $L$-spectra and the $Q$-spectra of $G_1\odot
G_2$ (respectively, $G_1\circleddash G_2$) with the help of the
\emph{coronal} of a matrix and the \emph{Kronecker product}. The
\emph{$M$-coronal} $\Gamma_M(x)$ of an $n\times n$ matrix $M$ is defined
\cite{kn:McLeman11,kn:Cui12} to be the sum of the entries of the
 matrix $(x I_n-M)^{-1}$, that is,
$$\Gamma_M(x)=\mathbf{1}_n^T(x I_n-M)^{-1}\mathbf{1}_n. $$
where $\mathbf{1}_n$ denotes the column vector of size $n$ with all
the entries equal one. It is well known \cite[Proposition 2]{kn:Cui12} that, if $M$ is an
$n \times  n$ matrix with each row sum equal to a constant $t$, then
\be \label{eq:GammaT} \Gamma_{M}(x) = \frac{n}{x-t}. \ee In
particular, since for any graph $G_2$ with $n_2$ vertices, each row
sum of $L(G_2)$ is equal to $0$, we have \be \label{eq:GammaTL}
\Gamma_{L(G_2)}(x) = n_2/x. \ee

The \emph{Kronecker product} $A\otimes B$ of two matrices
$A=(a_{ij})_{m \times n}$ and $B=(b_{ij})_{p \times q}$ is the $mp
\times nq$ matrix obtained from $A$ by replacing each element
$a_{ij}$ by $a_{ij}B$. This is an associative operation with the
property that $(A\otimes B)^T=A^T\otimes B^T$ and $(A\otimes B)(C\otimes D)=AC\otimes BD$ whenever the products $AC$ and $BD$ exist. The latter implies $(A\otimes B)^{-1}=A^{-1}\otimes B^{-1}$ for nonsingular matrices $A$ and $B$. Moreover, if $A$ and $B$ are $n \times n$ and $p \times p$ matrices, then $\det(A\otimes B)=(\det A)^p (\det B)^n$. The reader is referred to \cite{kn:Kronecker} for other properties of the Kronecker product not mentioned here.

The paper is organized as follows. In Section \ref{SVC:spec}, we compute the $A$-spectra, the $L$-spectra and the $Q$-spectra of the subdivision-vertex corona $G_1\odot G_2$ for a regular graph $G_1$ and an arbitrary graph $G_2$ (see Theorems
\ref{CoronaTh1}, \ref{CoronaTh2}, \ref{CoronaTh3}). Section \ref{SEC:spec} mainly investigates the $A$-spectra, the $L$-spectra and the $Q$-spectra of the subdivision-edge corona $G_1\circleddash G_2$ for a
regular graph $G_1$ and an arbitrary graph $G_2$ (see Theorems \ref{CoronaTh4},
\ref{CoronaTh5}, \ref{CoronaTh6}). As we will see in Corollaries \ref{ACorRegular3}, \ref{ACorRegular6}, \ref{ACorRegular8}, \ref{ACorRegular11}, \ref{ACorRegular14}, \ref{ACorRegular16}, our results on the spectra of $G_1\odot G_2$ and $G_1\circleddash G_2$ enable us to construct infinitely many pairs of cospectral graphs. Our constructions of infinite families of integral graphs are stated in Corollaries \ref{ACorRegular17}, \ref{ACorRegular18d}, \ref{ACorRegular18fd}, \ref{ACorRegular91fd}. In Corollaries \ref{ACorRegular5}, \ref{ACorRegular51}, \ref{ACorRegular13}, \ref{ACorRegular133}, we compute the number of spanning trees and the Kirchhoff index of $G_1\odot G_2$ (respectively, $G_1\circleddash G_2$) for a regular graph $G_1$ and an arbitrary graph $G_2$.

\section{Spectra of subdivision-vertex coronae}\label{SVC:spec}

Let $G_1$ be an arbitrary graphs on $n_1$ vertices and $m_1$ edges,
and $G_2$ an arbitrary graphs on $n_2$ vertices, respectively. We
first label the vertices of $G_1\odot G_2$ as follows. Let
$V(G_1)=\left\{v_1,v_2,\ldots,v_{n_1}\right\}$,
$I(G_1)=\left\{e_1,e_2,\ldots,e_{m_1}\right\}$ and
$V(G_2)=\left\{u_1,u_2,\ldots,u_{n_2}\right\}$. For $i = 1, 2,
\ldots, n_1$, let $u_1^i,u_2^i,\ldots,u_{n_2}^i$ denote the vertices
of the $i$th copy of $G_2$, with the understanding that $u_j^i$ is
the copy of $u_j$ for each $j$. Denote
$W_j=\left\{u_j^1,u_j^2,\ldots,u_j^{n_1}\right\}$ for $j = 1, 2,
\ldots, n_2$.
Then $$V(G_1)\cup I(G_1) \cup \left[W_1\cup W_2\cup\cdots\cup W_{n_2}\right]$$ is a
partition of $V(G_1\odot G_2)$. It is clear that the degrees of the
vertices of $G_1\odot G_2$ are: $d_{G_1\odot G_2}(v_i)= n_2+d_{G_1}(v_i)$ for $i=1,2,\ldots,n_1$, $d_{G_1\odot G_2}(e_i) = 2$ for $i=1,2,\ldots,m_1$, and
$d_{G_1\odot G_2}(u_j^i)=d_{G_2}(u_j)+1$ for $i=1,2,\ldots,n_1,\, j=1,2,\ldots,n_2$.

\subsection{$A$-spectra of subdivision-vertex coronae}
\label{sec:SVCoronaA}

\begin{thm}\label{CoronaTh1}
Let $G_1$ be an $r_1$-regular graph on $n_1$ vertices and $m_1$ edges, and $G_2$ an arbitrary graph on $n_2$ vertices. Then
\begin{eqnarray*}
\phi\left(A(G_1\odot G_2);x\right)=x^{m_1-n_1}\cdot\big(\phi(A(G_2);x)\big)^{n_1}\cdot\prod_{i=1}^{n_1}\Big(x^2-\Gamma_{A(G_2)}(x)x-r_1-\lambda_i(G_1)\Big). \end{eqnarray*}
\end{thm}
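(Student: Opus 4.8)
The plan is to write the adjacency matrix of $G_1\odot G_2$ in block form according to the partition $V(G_1)\cup I(G_1)\cup[W_1\cup\cdots\cup W_{n_2}]$ and then evaluate $\det(xI-A)$ by a Schur-complement reduction. Let $R$ be the $n_1\times m_1$ (unsigned) incidence matrix of $G_1$, so that $A(\mathcal{S}(G_1))=\left(\begin{smallmatrix}0&R\\R^T&0\end{smallmatrix}\right)$. The crucial observation is that, because the copies of $G_2$ are ordered coordinatewise through the sets $W_j$, every adjacency relation touching the copies becomes a Kronecker product: the block recording the edges between $V(G_1)$ and the copies is $\mathbf{1}_{n_2}^T\otimes I_{n_1}$, the block inside the copies is $A(G_2)\otimes I_{n_1}$, and the vertices of $I(G_1)$ are joined to no copy. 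Hence
$$A(G_1\odot G_2)=\begin{pmatrix}0&R&\mathbf{1}_{n_2}^T\otimes I_{n_1}\\ R^T&0&0\\ \mathbf{1}_{n_2}\otimes I_{n_1}&0&A(G_2)\otimes I_{n_1}\end{pmatrix}.$$

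Next I would isolate the bottom-right block $D$ indexed by $I(G_1)$ and the copies. It is block diagonal, $D=\diag\big(xI_{m_1},\,(xI_{n_2}-A(G_2))\otimes I_{n_1}\big)$, and I apply the Schur-complement identity $\det(xI-A)=\det D\cdot\det S$, where $S$ is the Schur complement on the $V(G_1)$ block. The determinant of $D$ is immediate from $\det(M\otimes I_{n_1})=(\det M)^{n_1}$, giving $\det D=x^{m_1}\,(\phi(A(G_2);x))^{n_1}$; this already produces the factors $x^{m_1}$ and $(\phi(A(G_2);x))^{n_1}$, and a leftover $x^{-n_1}$ coming out of $\det S$ will later convert $x^{m_1}$ into $x^{m_1-n_1}$.

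The heart of the computation is $S=xI_{n_1}-BD^{-1}C$ with $B=(-R,\,-\mathbf{1}_{n_2}^T\otimes I_{n_1})$ and $C=B^T$, which splits into two contributions. The $I(G_1)$-part gives $x^{-1}RR^T$, and here regularity enters, since $RR^T=D(G_1)+A(G_1)=r_1I_{n_1}+A(G_1)$. The copies-part is handled by the mixed-product rule $(P\otimes Q)(P'\otimes Q')=PP'\otimes QQ'$, which collapses a Kronecker triple product into the coronal:
$$(\mathbf{1}_{n_2}^T\otimes I_{n_1})\big((xI_{n_2}-A(G_2))^{-1}\otimes I_{n_1}\big)(\mathbf{1}_{n_2}\otimes I_{n_1})=\big(\mathbf{1}_{n_2}^T(xI_{n_2}-A(G_2))^{-1}\mathbf{1}_{n_2}\big)I_{n_1}=\Gamma_{A(G_2)}(x)\,I_{n_1}.$$
Thus $S=\big(x-\Gamma_{A(G_2)}(x)-x^{-1}r_1\big)I_{n_1}-x^{-1}A(G_1)$. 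Since $S$ is a scalar rational function of $A(G_1)$, it is diagonalized by the same orthonormal basis as $A(G_1)$, so $\det S=\prod_{i=1}^{n_1}\big(x-\Gamma_{A(G_2)}(x)-x^{-1}(r_1+\lambda_i(G_1))\big)=x^{-n_1}\prod_{i=1}^{n_1}\big(x^2-\Gamma_{A(G_2)}(x)x-r_1-\lambda_i(G_1)\big)$. Multiplying $\det D$ by $\det S$ yields the claimed formula.

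I expect the difficulty to lie in careful bookkeeping rather than in any genuine obstacle: one must get the Kronecker factors in the correct order from the coordinatewise labelling of the copies, and one must justify the use of $D^{-1}$ as an identity of rational functions valid for all $x$ outside the finitely many poles, extending to the polynomial identity by continuity. The two facts that make everything collapse are the identity $RR^T=r_1I_{n_1}+A(G_1)$ (using regularity) and the coronal evaluation of the Kronecker triple product.
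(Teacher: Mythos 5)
Your proposal is correct and follows essentially the same route as the paper: the same block form of $A(G_1\odot G_2)$ via the Kronecker product, a Schur-complement reduction, the coronal identity from the mixed-product rule, and $RR^T=r_1I_{n_1}+A(G_1)$. The only cosmetic difference is that you eliminate the $I(G_1)$ block and the copies in a single Schur step, whereas the paper first eliminates the copies and then expands the remaining $2\times 2$ block determinant; the computations coincide.
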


\begin{proof}
Let $R$ be the incidence matrix \cite{kn:Brouwer12} of $G_1$. Then, with respect to the partition $V(G_1)\cup I(G_1)\cup \left[W_1\cup W_2\cup\cdots\cup W_{n_2}\right]$ of $V(G_1\odot G_2)$, the adjacency matrix of $G_1\odot G_2$ can be written as
\[A(G_1\odot G_2)=\bmat{
                          0_{n_1\times n_1} & R & \mathbf{1}_{n_2}^T\otimes I_{n_1} \\[0.2cm]
                          R^T & 0_{m_1\times m_1} & 0_{m_1\times n_1n_2}\\[0.2cm]
                          \mathbf{1}_{n_2}\otimes I_{n_1} & 0_{n_1n_2\times m_1} & A(G_2)\otimes I_{n_1}
                       },\]
where  $0_{s\times t}$ denotes the $s\times t$ matrix with all entries equal to zero, $I_n$ is the identity matrix of order n.
Thus the adjacency characteristic polynomial of $G_1\odot G_2$ is given by
\begin{eqnarray*}
\phi\left(A(G_1\odot G_2)\right)
&=& \det\bmat{
                          xI_{n_1} & -R & -\mathbf{1}_{n_2}^T\otimes I_{n_1} \\[0.2cm]
                          -R^T & xI_{m_1} & 0_{m_1\times n_1n_2}\\[0.2cm]
                          -\mathbf{1}_{n_2}\otimes I_{n_1} & 0_{n_1n_2\times m_1} & (xI_{n_2}-A(G_2))\otimes I_{n_1}
                       }\\ [0.2cm]
&=& \det((xI_{n_2}-A(G_2))\otimes I_{n_1})\cdot\det(S),
\end{eqnarray*}
where
\begin{eqnarray*}
S&=&\pmat{
          xI_{n_1} & -R \\[0.2cm]
         -R^T & xI_{m_1}
       }-\pmat{-\mathbf{1}_{n_2}^T\otimes I_{n_1}\\[0.2cm]
                0_{m_1\times n_1n_2}}((xI_{n_2}-A(G_2))\otimes I_{n_1})^{-1}\pmat{-\mathbf{1}_{n_2}\otimes I_{n_1} & 0_{n_1n_2\times m_1}}\\ [0.2cm]
&=&\pmat{
          \left(x-\Gamma_{A(G_2)}(x)\right)I_{n_1}  & -R \\[0.2cm]
         -R^T & xI_{m_1}
       }
\end{eqnarray*}
is the Schur complement\cite{kn:Schur} of $(xI_{n_2}-A(G_2))\otimes I_{n_1}$. It is well known \cite{kn:Cvetkovic10} that $RR^T=A(G_1)+r_1I_{n_1}$. Thus, the result follows from
\begin{eqnarray*}
\det\left(\left(xI_{n_2}-A(G_2)\right)\otimes I_{n_1}\right)&=&\left(\det(xI_{n_2}-A(G_2))\right)^{n_1}\left(\det (I_{n_1})\right)^{n_2}\\
                                                            &=&\big(\phi\left(A(G_2)\right)\big)^{n_1}
\end{eqnarray*}
and
\begin{eqnarray*}
% \nonumber to remove numbering (before each equation)
\det (S)&=&x^{m_1}\cdot\det\left(\left(x-\Gamma_{A(G_2)}(x)\right)I_{n_1}-\frac{1}{x}RR^T\right) \\
&=& x^{m_1}\cdot\prod_{i=1}^{n_1}\left(x-\Gamma_{A(G_2)}(x)-\frac{r_1}{x}-\frac{1}{x}\lambda_i(G_1)\right) \\
&=& x^{m_1-n_1}\cdot\prod_{i=1}^{n_1}\Big(x^2-\Gamma_{A(G_2)}(x)x-r_1-\lambda_i(G_1)\Big).
\end{eqnarray*}\qed
\end{proof}

Theorem \ref{CoronaTh1} enables us to compute the $A$-spectra of many subdivision-vertex coronae, if we can determine the $A(G_2)$-coronal $\Gamma_{A(G_2)}(x)$. Fortunately, we have known the $A(G_2)$-coronal for some graph $G_2$. For example, if $G_2$ is an $r_2$-regular graph on $n_2$ vertices, then \cite{kn:McLeman11,kn:Cui12} $\Gamma_{A(G_2)}(x)=n_2/(x-r_2)$,
and if $G_2\cong  K_{p,q}$ which is the complete bipartite graph with $p,q\geqslant1$ vertices in the two parts of its bipartition, then \cite{kn:McLeman11} $\Gamma_{A(G_2)}(x) =((p+q)x+2pq)/(x^2-pq)$.
Thus, Theorem \ref{CoronaTh1} implies the following results immediately.

\begin{cor}\label{ACorRegular1}
Let $G_1$ be an $r_1$-regular graph on $n_1$ vertices and $m_1$ edges, and $G_2$ an $r_2$-regular graph on $n_2$ vertices. Then the $A$-spectrum of $G_1\odot G_2$ consists of:
\begin{itemize}
  \item[\rm (a)] $\lambda_i(G_2)$, repeated $n_1$ times, for each $i=2,3,\ldots,n_2$;
  \item[\rm (b)] $0$, repeated $m_1-n_1$ times;
  \item[\rm (c)] three roots of the equation
\[x^3-r_2x^2-(r_1+\lambda_j(G_1)+n_2)x+r_2(r_1+\lambda_j(G_1))=0\]
for each $j=1, 2, \ldots, n_1$.
\end{itemize}
\end{cor}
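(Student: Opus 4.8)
The plan is to specialize the factored adjacency characteristic polynomial of Theorem \ref{CoronaTh1} to the case where $G_2$ is $r_2$-regular, for which the coronal is known in closed form. Since $G_2$ is $r_2$-regular on $n_2$ vertices, every row sum of $A(G_2)$ equals $r_2$, so by \eqref{eq:GammaT} (the row-sum formula quoted from \cite{kn:Cui12}) we have $\Gamma_{A(G_2)}(x)=n_2/(x-r_2)$. Substituting this single explicit input is the only nontrivial ingredient; the factors $x^{m_1-n_1}$ and $\big(\phi(A(G_2);x)\big)^{n_1}$ in Theorem \ref{CoronaTh1} are already in usable form.

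First I would examine the generic factor $x^2-\Gamma_{A(G_2)}(x)\,x-r_1-\lambda_i(G_1)$. Multiplying by $(x-r_2)$ clears the single pole and produces a cubic; a short expansion gives
\[
(x-r_2)\Big(x^2-\tfrac{n_2}{x-r_2}x-r_1-\lambda_i(G_1)\Big)=x^3-r_2x^2-\big(r_1+\lambda_i(G_1)+n_2\big)x+r_2\big(r_1+\lambda_i(G_1)\big),
\]
which is exactly the polynomial in part (c). Hence the whole product over $i$ equals $(x-r_2)^{-n_1}$ times the product of these $n_1$ cubics.

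The key step is then the cancellation of the spurious factor $(x-r_2)^{n_1}$. Because $G_2$ is $r_2$-regular, $r_2=\lambda_1(G_2)$ is an eigenvalue of $A(G_2)$ (with eigenvector $\mathbf{1}_{n_2}$), so $\phi(A(G_2);x)=(x-r_2)\prod_{i=2}^{n_2}\big(x-\lambda_i(G_2)\big)$ and therefore $\big(\phi(A(G_2);x)\big)^{n_1}=(x-r_2)^{n_1}\prod_{i=2}^{n_2}\big(x-\lambda_i(G_2)\big)^{n_1}$. Note this holds even if $G_2$ is disconnected, since any further copies of $r_2$ simply appear among $\lambda_2(G_2),\dots,\lambda_{n_2}(G_2)$. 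Multiplying this against the previous product cancels $(x-r_2)^{n_1}$, so Theorem \ref{CoronaTh1} collapses to
\[
\phi\big(A(G_1\odot G_2);x\big)=x^{m_1-n_1}\prod_{i=2}^{n_2}\big(x-\lambda_i(G_2)\big)^{n_1}\prod_{j=1}^{n_1}\Big(x^3-r_2x^2-\big(r_1+\lambda_j(G_1)+n_2\big)x+r_2\big(r_1+\lambda_j(G_1)\big)\Big).
\]

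Reading off the roots then yields the three parts of the spectrum: $0$ with multiplicity $m_1-n_1$ (part (b)), each $\lambda_i(G_2)$ for $i=2,\dots,n_2$ with multiplicity $n_1$ (part (a)), and the three roots of each cubic for $j=1,\dots,n_1$ (part (c)). I expect the only real subtlety to be precisely this cancellation: one must notice that the top eigenvalue $\lambda_1(G_2)=r_2$ does \emph{not} survive as an eigenvalue of $G_1\odot G_2$, but is absorbed by the pole of the coronal, which is exactly why part (a) omits the index $i=1$. As a consistency check, the multiplicities sum to $(m_1-n_1)+n_1(n_2-1)+3n_1=m_1+n_1(1+n_2)$, matching the vertex count of $G_1\odot G_2$ recorded in the introduction.
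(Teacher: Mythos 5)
Your proposal is correct and takes the same route as the paper, which obtains this corollary directly from Theorem \ref{CoronaTh1} by substituting $\Gamma_{A(G_2)}(x)=n_2/(x-r_2)$ and leaves the details implicit. Your clearing of the pole to produce the cubic, and the cancellation of $(x-r_2)^{n_1}$ against the factor $\big(\phi(A(G_2);x)\big)^{n_1}$ (which explains why the index $i=1$ is omitted in part (a)), are exactly the steps the paper's ``immediately'' is standing in for, and your multiplicity count confirms the bookkeeping.
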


\begin{cor}\label{ACorRegular2}
Let $G$ be an $r$-regular graph on $n$ vertices and $m$ edges with $m \geqslant n$, and let $p,q \geqslant 1$ be integers.  Then the $A$-spectrum of $G\odot K_{p,q}$ consists of:
\begin{itemize}
  \item[\rm (a)] $0$, repeated $m+(p+q-3)n$ times;
  \item[\rm (b)] four roots of the equation
\[x^4-(pq+p+q+r+\lambda_j(G))x^2-2pqx+pq(r+\lambda_j(G))=0\]
for each $j=1, 2, \ldots, n$.
\end{itemize}
\end{cor}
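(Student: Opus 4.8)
The plan is to specialize Theorem~\ref{CoronaTh1} to the case $G_1 = G$ and $G_2 = K_{p,q}$, so that $n_1 = n$, $m_1 = m$, $r_1 = r$ and $n_2 = p+q$. Two ingredients are required: the adjacency characteristic polynomial of $K_{p,q}$ and its coronal. The coronal is quoted in the paragraph preceding this corollary, namely $\Gamma_{A(K_{p,q})}(x) = \big((p+q)x + 2pq\big)/(x^2 - pq)$. For the characteristic polynomial, I would recall that $A(K_{p,q})$ has rank $2$, with nonzero eigenvalues $\pm\sqrt{pq}$ and the eigenvalue $0$ of multiplicity $p+q-2$, so that $\phi\big(A(K_{p,q});x\big) = x^{p+q-2}(x^2 - pq)$.

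With these in hand I would substitute directly into the formula of Theorem~\ref{CoronaTh1}. The factor $\big(\phi(A(K_{p,q});x)\big)^{n}$ contributes $x^{n(p+q-2)}(x^2-pq)^n$, while each factor of the product becomes $x^2 - \Gamma_{A(K_{p,q})}(x)\,x - r - \lambda_j(G)$. The key algebraic step is to clear the denominator $x^2 - pq$ in each such factor: after putting everything over this common denominator and expanding, the numerator collapses to exactly $x^4 - (pq+p+q+r+\lambda_j(G))x^2 - 2pqx + pq(r+\lambda_j(G))$, the quartic appearing in part~(b). Hence each factor equals this quartic divided by $x^2 - pq$, and the product over $j$ carries a denominator $(x^2-pq)^n$.

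The remaining step is pure bookkeeping. The denominator $(x^2-pq)^n$ produced by the product cancels exactly against the factor $(x^2-pq)^n$ coming from $\big(\phi(A(K_{p,q}))\big)^{n}$, leaving only a power of $x$ times the $n$ quartics. Collecting the exponent of $x$ yields $(m-n) + n(p+q-2) = m + (p+q-3)n$, which is precisely the multiplicity of the root $0$ asserted in part~(a); the hypothesis $m \geqslant n$ guarantees that this exponent is nonnegative, the only binding case being $p=q=1$. The four roots of each of the $n$ quartics then account for part~(b), and a degree count ($4n + m + (p+q-3)n = m + n + n(p+q)$) confirms that no eigenvalue has been lost.

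I expect no genuine obstacle: the statement is a direct specialization of Theorem~\ref{CoronaTh1}, and the only point demanding care is the denominator-clearing expansion, where one must check that the linear term $-2pqx$ and the constant $pq(r+\lambda_j(G))$ emerge correctly, so that the rational factors reduce to honest polynomials and the powers of $x^2-pq$ cancel cleanly.
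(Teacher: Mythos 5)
Your proposal is correct and follows exactly the route the paper intends: the corollary is presented as an immediate specialization of Theorem~\ref{CoronaTh1} using the quoted coronal $\Gamma_{A(K_{p,q})}(x)=((p+q)x+2pq)/(x^2-pq)$ together with $\phi(A(K_{p,q});x)=x^{p+q-2}(x^2-pq)$, and your denominator-clearing computation and multiplicity bookkeeping (including the role of $m\geqslant n$ when $p=q=1$) are all accurate.
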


A graph whose $A$-spectrum consists of entirely of integers is called an \emph{$A$-integral graph}. The question of ``Which graphs have $A$-integral spectra?'' was first posed by \textsc{F. Harary} and \textsc{A.J. Schwenk} in 1973 (see \cite{Harary74}), with the immediate remark that the general problem appears challenging and intractable. Although not only the number of $A$-integral graphs is infinite, but also one can find them in all classes of graphs and among graphs of all orders, $A$-integral graphs are very rare and difficult to be found. For more properties and constructions on $A$-integral graphs, please refer to an excellent survey \cite{kn:Balinska02}.

It is well known that the complete graph $K_n$ and complete bipartite graph $K_{n,n}$ are $A$-integral graphs. In the following, with the help of $K_n$ and $K_{n,n}$, we will give two constructions of infinite families of $A$-integral graphs by using the subdivision-vertex coronae.

The \emph{complement} $\overline{G}$ of a graph $G$ is the graph with the same vertex set as $G$ such that two vertices are adjacent in $\overline{G}$ if and only if they are not adjacent in $G$. Note that the complete graph $K_n$ is $(n-1)$-regular with the $A$-spectrum $(n-1)^1,\,(-1)^{n-1}$, where $a^b$ denotes that the multiplicity of $a$ is $b$. Then, by Corollary \ref{ACorRegular1}, the $A$-spectrum of $K_{n_1}\odot \overline{K_{n_2}}$ consists of $\left(\pm\sqrt{2n_1-2+n_2}\right)^1$, $\left(\pm\sqrt{n_1-2+n_2}\right)^{n_1-1}$, $0^{n_1(n_2-1)+m_1}$, which implies that $K_{n_1}\odot \overline{K_{n_2}}$ is $A$-integral if and only if $\sqrt{2n_1-2+n_2}$ and $\sqrt{n_1-2+n_2}$ are integers. Now we present our first construction of an infinite family of $A$-integral graphs (see Figure \ref{fg22} for an example).

\begin{figure}
\centering
\vspace{-1.75cm}
\includegraphics*[height=8.0cm]{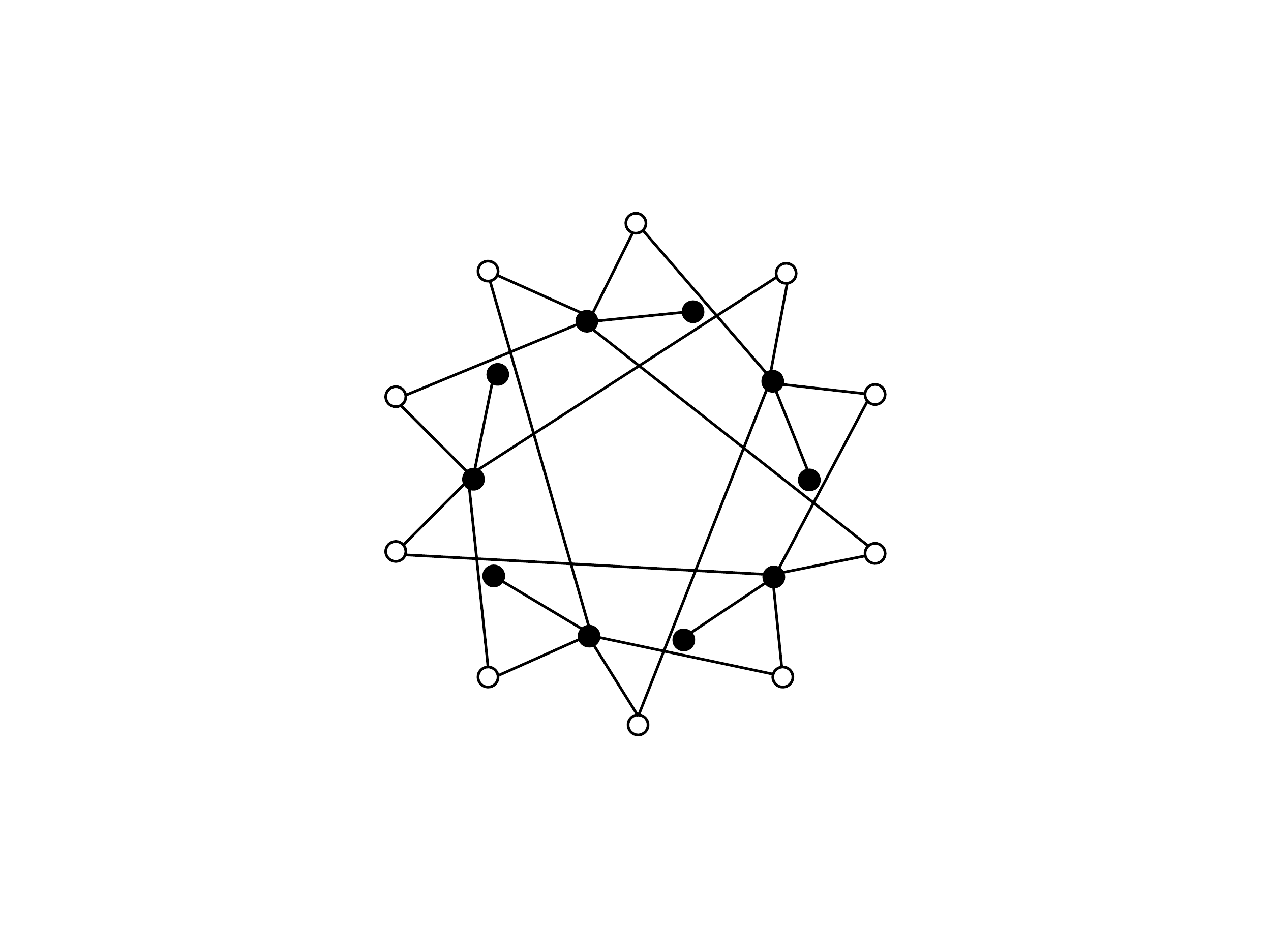}
\vspace{-1.75cm}
\caption{\small $K_5\odot \overline{K_1}$ with $A$-spectrum $(\pm3)^1$, $(\pm2)^4$, $0^{10}$.}
\label{fg22}
\end{figure}

\begin{cor}\label{ACorRegular17}
$K_{n_1}\odot \overline{K_{n_2}}$ is $A$-integral if and only if $n_1=s^2-h^2$ and $n_2=2h^2-s^2+2$ for $h=2,3,\ldots$,  $s=3,4,\ldots$, and $h^2<s^2<2h^2+2$.
\end{cor}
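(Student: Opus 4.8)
The plan is to reduce the integrality question to a pair of simultaneous quadratic conditions on $n_1$ and $n_2$, and then solve the resulting Diophantine system explicitly. From the computation preceding the statement, the $A$-spectrum of $K_{n_1}\odot\overline{K_{n_2}}$ consists of $\left(\pm\sqrt{2n_1-2+n_2}\right)^1$, $\left(\pm\sqrt{n_1-2+n_2}\right)^{n_1-1}$, and $0^{n_1(n_2-1)+m_1}$. Since the zeros are trivially integers, the graph is $A$-integral precisely when both surds are integers. So first I would record the equivalence: $K_{n_1}\odot\overline{K_{n_2}}$ is $A$-integral if and only if there exist nonnegative integers $s,h$ with
\begin{eqnarray*}
2n_1-2+n_2 &=& s^2,\\
n_1-2+n_2 &=& h^2.
\end{eqnarray*}

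Next I would subtract the two equations to eliminate $n_2$, giving $n_1=s^2-h^2$, and then back-substitute into the second equation to solve for $n_2$, yielding $n_2=h^2+2-n_1=h^2+2-(s^2-h^2)=2h^2-s^2+2$. This already produces the stated parametrization; the remaining work is to pin down the exact ranges of $s$ and $h$. The constraints come from the requirement that $n_1\ge 1$ and $n_2\ge 1$ (so that the graphs in the construction are nonempty): $n_1=s^2-h^2\ge 1$ forces $s^2>h^2$, i.e.\ $s>h$, and $n_2=2h^2-s^2+2\ge 1$ forces $s^2<2h^2+2$. Together these give the two-sided bound $h^2<s^2<2h^2+2$ appearing in the statement. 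I would then verify that the lower cutoffs $h=2$ and $s=3$ are exactly what the bounds enforce: the smallest admissible pair satisfying $h<s$ and $s^2<2h^2+2$ with $n_1,n_2\ge 1$ is indeed $(h,s)=(2,3)$, since smaller values of $h$ leave no integer $s$ in the open interval $(h,\sqrt{2h^2+2})$.

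The one genuine subtlety — and the step I would watch most carefully — is the direction of the equivalence and the handling of the two spectral conditions as a simultaneous system rather than two independent ones. It is not enough that each surd individually can be made integral; one must exhibit a single pair $(n_1,n_2)$ making both $2n_1-2+n_2$ and $n_1-2+n_2$ perfect squares at once, which is exactly what the substitution argument delivers. I would also double-check the interval endpoints: the inequalities $h^2<s^2$ and $s^2<2h^2+2$ are strict, corresponding to $n_1\ge 1$ and $n_2\ge 1$ (one should confirm whether the intended lower bound on the orders is $1$; if one instead insisted $n_2\ge 1$ strictly excludes the boundary, the strict inequality $s^2<2h^2+2$ is correct, whereas $n_2\ge 0$ would relax it to $s^2\le 2h^2+2$). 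Assuming the intended reading $n_1,n_2\ge 1$, the parametrization together with the range $h=2,3,\ldots$, $s=3,4,\ldots$, $h^2<s^2<2h^2+2$ is both necessary and sufficient, completing the proof.
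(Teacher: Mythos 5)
Your proof is correct and takes essentially the same route as the paper's: set $\sqrt{2n_1-2+n_2}=s$ and $\sqrt{n_1-2+n_2}=h$, eliminate to get $n_1=s^2-h^2$ and $n_2=2h^2-s^2+2$, and read the range of $(h,s)$ off the constraints $n_1,n_2\geq 1$. One quibble, which the paper's own proof shares when it jumps from $n_1,n_2>0$ to $h\geq 2$, $s\geq 3$: your claim that smaller values of $h$ leave no integer $s$ in $(h,\sqrt{2h^2+2})$ fails at $h=0$, $s=1$, which gives the degenerate pair $n_1=n_2=1$ (and $K_1\odot\overline{K_1}\cong K_2$ is indeed $A$-integral), so strictly speaking the ``only if'' direction requires excluding this trivial case.
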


\begin{proof}
From the above statement, $K_{n_1}\odot \overline{K_{n_2}}$ is $A$-integral if and only if $\sqrt{2n_1-2+n_2}$ and $\sqrt{n_1-2+n_2}$ are integers.  Let $\sqrt{2n_1-2+n_2}=s$ and $\sqrt{n_1-2+n_2}=h$, where $s, h$ are nonnegative integers. Solving these two equations, we obtain that $ n_1 = s^2 - h^2 $, $n_2 = 2h^2 - s^2 + 2$. Since $n_1 > 0 $ and $n_2 > 0$,  we have that $h\ge2$, $s\ge3$ and $h^2<s^2<2h^2+2$.
\qed
\end{proof}

Notice that the complete bipartite graph $K_{n,n}$ is $n$-regular with the $A$-spectrum $(\pm n)^1,\,0^{2n-2}$. By Corollary \ref{ACorRegular1}, the $A$-spectrum of $K_{n_1,n_1}\odot \overline{K_{n_2}}$ consists of $\left(\pm\sqrt{2n_1+n_2}\right)^1$, $\left(\pm\sqrt{n_1+n_2}\right)^{2n_1-2}$,  $\left(\pm\sqrt{n_2}\right)^1$,  $0^{2n_1n_2-2n_1+m_1}$, which implies that $K_{n_1,n_1}\odot \overline{K_{n_2}}$ is $A$-integral if and only if $\sqrt{2n_1+n_2}$, $\sqrt{n_1+n_2}$ and $\sqrt{n_2}$ are integers. Here, we present our second construction of an infinite family of $A$-integral graphs.

\begin{cor}\label{ACorRegular18d}
$K_{n_1,n_1}\odot \overline{K_{n_2}}$ is $A$-integral if $n_1=4st^2(2s^2+3s+1)$ and $n_2=t^2(2s^2-1)^2$ for $s=1,2,3,\ldots$, $t=1,2,3,\ldots$.
\end{cor}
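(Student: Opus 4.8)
The plan is to invoke directly the integrality criterion established in the paragraph immediately preceding the corollary: the graph $K_{n_1,n_1}\odot \overline{K_{n_2}}$ is $A$-integral if and only if the three quantities $\sqrt{2n_1+n_2}$, $\sqrt{n_1+n_2}$ and $\sqrt{n_2}$ are all integers. Since the corollary asserts only an implication (an ``if''), it suffices to substitute the prescribed parametrization $n_1=4st^2(2s^2+3s+1)$, $n_2=t^2(2s^2-1)^2$ into these three radicands and verify that each becomes a perfect square. First I would dispose of $\sqrt{n_2}$, which is immediate: $n_2=\big(t(2s^2-1)\big)^2$, so $\sqrt{n_2}=t(2s^2-1)$, an integer for every $s,t\ge 1$.

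Next I would expand $n_1+n_2$ and $2n_1+n_2$ as polynomials in $s$ (the factorization $2s^2+3s+1=(s+1)(2s+1)$ is a convenient bookkeeping aid). Collecting terms, I expect
\[ n_1+n_2=t^2\big(4s^4+8s^3+8s^2+4s+1\big)=t^2\,(2s^2+2s+1)^2, \]
\[ 2n_1+n_2=t^2\big(4s^4+16s^3+20s^2+8s+1\big)=t^2\,(2s^2+4s+1)^2. \]
Hence $\sqrt{n_1+n_2}=t(2s^2+2s+1)$ and $\sqrt{2n_1+n_2}=t(2s^2+4s+1)$ are both integers. With all three radicands now exhibited as perfect squares, the integrality criterion yields the claim at once.

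The only computational content is the two polynomial identities above, which are a routine check; the genuinely delicate point, which the statement suppresses, is how such a parametrization arises in the first place. Writing $a=\sqrt{2n_1+n_2}$, $b=\sqrt{n_1+n_2}$, $c=\sqrt{n_2}$ and eliminating $n_1,n_2$ forces the single Diophantine constraint $a^2+c^2=2b^2$ (equivalently $a^2-b^2=b^2-c^2$, so that $c^2,b^2,a^2$ are three squares in arithmetic progression), subject to the positivity requirements $n_1>0$ and $n_2>0$. The main obstacle is therefore not the verification but the construction of an infinite solution family to $a^2+c^2=2b^2$ meeting these constraints; the chosen triple $(a,b,c)=t\,(2s^2+4s+1,\;2s^2+2s+1,\;2s^2-1)$ is exactly such a family. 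As a consistency test I would confirm $a^2+c^2=2b^2$ directly from these three polynomials, which holds, thereby certifying that the displayed parametrization is internally coherent and completes the argument.
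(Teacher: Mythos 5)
Your verification is correct: the two expansions $n_1+n_2=t^2(2s^2+2s+1)^2$ and $2n_1+n_2=t^2(2s^2+4s+1)^2$ check out, $\sqrt{n_2}=t(2s^2-1)$ is immediate, and since the corollary asserts only the ``if'' direction, exhibiting the three radicands as perfect squares settles the claim. Your route is, however, genuinely different from the paper's. The paper works forwards: it sets $a=\sqrt{2n_1+n_2}$, $b=\sqrt{n_1+n_2}$, $c=\sqrt{n_2}$, reduces to the constraint $2b^2-a^2=c^2$ (your $a^2+c^2=2b^2$, i.e.\ three squares in arithmetic progression), and then \emph{derives} the parametrization through a chain of substitutions ($u=a+b$, $v=a-b$, $x=b+c$, $y=b-c$, then $x-v=2vs$, $v=2st$), arriving at $a=2s^2t+4st+t$, $b=2s^2t+2st+t$, $c=2s^2t-t$ --- exactly the triple you name as a consistency check. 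What the paper's derivation buys is an explanation of where the family comes from; what it costs is that several of its substitution steps silently restrict to particular solutions, so as a proof of the stated implication it is no stronger than yours, and it is considerably messier. Your direct substitution is the cleaner and more self-contained argument for the ``if'' statement as written, and your closing remark correctly identifies that the only content the paper adds beyond your verification is the (non-exhaustive) construction of the solution family.
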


\begin{proof}
From the above statement, $K_{n_1,n_1}\odot \overline{K_{n_2}}$ is $A$-integral if and only if $\sqrt{2n_1+n_2}$ , $\sqrt{n_1+n_2}$ and $\sqrt{n_2}$ are integers.  Let $\sqrt{2n_1+n_2}=a$, $\sqrt{n_1+n_2}=b$ and $\sqrt{n_2}=c$ , where $a, b , c$ are nonnegative integers. Solving these equations, we obtain that $n_1=a^2-b^2$ and $n_2=2b^2-a^2=c^2$. Notice that $2b^2-a^2=c^2$ is equivalent to $(a+b)(a-b)=(b+c)(b-c)$. Let $u=a+b$, $v=a-b$, $x=b+c$ and $y=b-c$. Then we obtain that $uv=xy$ and $u-v=x+y$. Combining these two equations and eliminating $u$, we have $2v^2=(x-v)(y-v)$. Let $x-v=2vs$ and $s(y-v)=v$ for $s=1,2,\ldots$. Then $x=2vs+v=b+c$ and $y=\frac{v}{s}+v=b-c$. Thus, $a=b+v=vs+2v+\frac{v}{2s}$, $b=vs+v+\frac{v}{2s}$ and $c=vs-\frac{v}{2s}$. Let $v=2st$ for $t=1,2,\ldots$. Then we have $a=2s^2t+4st+t$, $b=2s^2t+2st+t$ and $c=2s^2t-t$, which imply that $n_1=4st^2(2s^2+3s+1)$ and $n_2=c^2=t^2(2s^2-1)^2$.
\qed
\end{proof}

As stated in the following corollary of Theorem \ref{CoronaTh1}, the subdivision-vertex corona enables us to obtain infinitely many pairs of $A$-cospectral graphs.

\begin{cor}\label{ACorRegular3}
\begin{itemize}
  \item[\rm (a)] If $G_1$ and $G_2$ are $A$-cospectral regular graphs, and $H$ is an arbitray graph, then $G_1\odot H$ and $G_2\odot H$ are $A$-cospectral.
  \item[\rm (b)] If G is a regular graph, and $H_1$ and $H_2$ are $A$-cospectral graphs with $\Gamma_{A(H_1)}(x)=\Gamma_{A(H_2)}(x)$, then $G\odot H_1$ and $G\odot H_2$ are $A$-cospectral.
\end{itemize}
\end{cor}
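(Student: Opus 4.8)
The plan is to derive both statements directly from the closed form for $\phi(A(G_1\odot G_2);x)$ in Theorem \ref{CoronaTh1}, by checking that replacing one factor by a cospectral one (with matching auxiliary data) leaves every ingredient of that formula unchanged. Recall that this formula involves only the parameters $n_1$, $m_1$, $r_1$, the eigenvalue multiset $\{\lambda_i(G_1)\}$, the polynomial $\phi(A(G_2);x)$, and the coronal $\Gamma_{A(G_2)}(x)$; so in each case it suffices to confirm that all of these agree for the two graphs being compared.

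For part (a), I would first record the standard fact that two $A$-cospectral regular graphs share all the data entering the formula. Since $G_1$ and $G_2$ are cospectral they have the same number of vertices $n_1$ (the number of eigenvalues) and the same eigenvalue multiset $\lambda_1(G_1)\ge\cdots\ge\lambda_{n_1}(G_1)$. Regularity gives $A(G_i)\mathbf{1}_{n_1}=r\mathbf{1}_{n_1}$, so the common degree $r_1$ is an eigenvalue of each; equivalently $n_1 r_1=\mathrm{tr}(A^2)=\sum_i\lambda_i^2$ is spectrum-determined, which forces the two degrees to coincide and hence the two edge counts $m_1=n_1 r_1/2$ to coincide as well. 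With $H$ common to both coronae, the quantities $n_1$, $m_1$, $r_1$, $\{\lambda_i\}$, $\phi(A(H);x)$ and $\Gamma_{A(H)}(x)$ are identical for $G_1\odot H$ and $G_2\odot H$. Substituting into Theorem \ref{CoronaTh1} then shows $\phi(A(G_1\odot H);x)=\phi(A(G_2\odot H);x)$, which is exactly $A$-cospectrality.

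For part (b), the first factor $G$ is fixed, so $n_1$, $m_1$, $r_1$ and $\{\lambda_i(G)\}$ are automatically common to $G\odot H_1$ and $G\odot H_2$. The two hypotheses supply the remaining ingredients: $H_1$ and $H_2$ being $A$-cospectral gives $\phi(A(H_1);x)=\phi(A(H_2);x)$ (in particular they share the vertex count $n_2$, read off as the degrees of these polynomials), while the assumption $\Gamma_{A(H_1)}(x)=\Gamma_{A(H_2)}(x)$ makes the quadratic factors $x^2-\Gamma_{A(H_j)}(x)\,x-r_1-\lambda_i(G)$ agree. Feeding these into Theorem \ref{CoronaTh1} yields $\phi(A(G\odot H_1);x)=\phi(A(G\odot H_2);x)$.

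I expect no genuine obstacle: both parts reduce to inspecting which symbols in the formula of Theorem \ref{CoronaTh1} are pinned down by the hypotheses. The only point needing a word of care is the bookkeeping in part (a) that cospectral regular graphs have equal degree (and hence equal edge count), which is where the regularity hypothesis is essential. This is also precisely why, in part (b), the extra condition $\Gamma_{A(H_1)}(x)=\Gamma_{A(H_2)}(x)$ must be imposed by hand: cospectrality of $H_1$ and $H_2$ controls $\phi(A(H_j);x)$ but not the coronal, so without this assumption the product factors need not match.
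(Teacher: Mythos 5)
Your proposal is correct and follows exactly the route the paper intends: the corollary is stated as an immediate consequence of Theorem \ref{CoronaTh1}, and your verification that all the ingredients of that formula ($n_1$, $m_1$, $r_1$, the eigenvalue multiset, $\phi(A(\cdot);x)$ and the coronal) are determined by the hypotheses is precisely the omitted argument. Your extra remark that cospectral \emph{regular} graphs share the same degree and edge count is the right point of care for part (a).
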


\subsection{$L$-spectra of subdivision-vertex coronae}
\label{sec:SVCoronaL}

\begin{thm}\label{CoronaTh2}
Let $G_1$ be an $r_1$-regular graph on $n_1$ vertices and $m_1$ edges, and $G_2$ an arbitrary graph on $n_2$ vertices. Then
\begin{eqnarray*}
%\begin{split}
\begin{aligned}
\phi\left(L(G_1 \odot G_2);x\right)
&=(x-2)^{m_1-n_1}\cdot\prod_{i=2}^{n_2}\big(x-1-\mu_i(G_2)\big)^{n_1} \\
&\quad\cdot\prod_{i=1}^{n_1}\Big(x^3-(3+r_1+n_2)x^2+(2+r_1+\mu_i(G_1)+2n_2)x-\mu_i(G_1)\Big).
\end{aligned}
%\end{split}
\end{eqnarray*}
\end{thm}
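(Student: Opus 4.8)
The plan is to follow the template of the proof of Theorem \ref{CoronaTh1}, replacing the adjacency matrix by the Laplacian and carefully tracking the degree contributions. First I would write $L(G_1\odot G_2)=D-A$ with respect to the partition $V(G_1)\cup I(G_1)\cup[W_1\cup\cdots\cup W_{n_2}]$. The adjacency part is exactly the matrix displayed in Theorem \ref{CoronaTh1}, while the diagonal degree matrix contributes $(n_2+r_1)I_{n_1}$ on the $V(G_1)$ block (since $G_1$ is $r_1$-regular and each $v_i$ gains $n_2$ neighbours from its copy of $G_2$), $2I_{m_1}$ on the $I(G_1)$ block, and $(D(G_2)+I_{n_2})\otimes I_{n_1}$ on the $W$ block (each $u_j^i$ gains exactly one neighbour, its corona vertex). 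Hence the bottom-right block of $L$ equals $(L(G_2)+I_{n_2})\otimes I_{n_1}$, so $xI-L(G_1\odot G_2)$ has the same shape as in Theorem \ref{CoronaTh1} but with $(x-n_2-r_1)I_{n_1}$, $(x-2)I_{m_1}$, and $((x-1)I_{n_2}-L(G_2))\otimes I_{n_1}$ on the diagonal.

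Next I would take the Schur complement of the $W$-block $Z=((x-1)I_{n_2}-L(G_2))\otimes I_{n_1}$. Its determinant factors, by the Kronecker determinant rule, as $\det Z=\big(\phi(L(G_2);x-1)\big)^{n_1}=(x-1)^{n_1}\prod_{i=2}^{n_2}(x-1-\mu_i(G_2))^{n_1}$, using $\mu_1(G_2)=0$. The key point is that the off-diagonal coupling collapses via the mixed-product property: $(\mathbf{1}_{n_2}^T\otimes I_{n_1})Z^{-1}(\mathbf{1}_{n_2}\otimes I_{n_1})=\big(\mathbf{1}_{n_2}^T((x-1)I_{n_2}-L(G_2))^{-1}\mathbf{1}_{n_2}\big)\otimes I_{n_1}=\Gamma_{L(G_2)}(x-1)\,I_{n_1}$, and by (\ref{eq:GammaTL}) this equals $\tfrac{n_2}{x-1}I_{n_1}$. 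The surviving Schur complement is therefore $S=\pmat{\big(x-n_2-r_1-\frac{n_2}{x-1}\big)I_{n_1} & R \\ R^T & (x-2)I_{m_1}}$.

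Then I would compute $\det S$ by a second Schur complement on the $(x-2)I_{m_1}$ block, giving $\det S=(x-2)^{m_1}\det\big((x-n_2-r_1-\frac{n_2}{x-1})I_{n_1}-\frac{1}{x-2}RR^T\big)$. Invoking $RR^T=A(G_1)+r_1I_{n_1}$ together with the regularity relation $A(G_1)=r_1I_{n_1}-L(G_1)$, so that $RR^T=2r_1I_{n_1}-L(G_1)$ has eigenvalues $2r_1-\mu_i(G_1)$, this last determinant diagonalizes to $\prod_{i=1}^{n_1}\big(x-n_2-r_1-\frac{n_2}{x-1}-\frac{2r_1-\mu_i(G_1)}{x-2}\big)$. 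Forming $\phi(L)=\det Z\cdot\det S$ and clearing denominators by absorbing one factor $(x-1)$ and one factor $(x-2)$ into each of the $n_1$ factors, the leading $(x-1)^{n_1}$ coming from $\det Z$ cancels exactly, leaving $(x-2)^{m_1-n_1}\prod_{i=2}^{n_2}(x-1-\mu_i(G_2))^{n_1}$ times a product of cubics. A direct expansion of $(x-1)(x-2)(x-n_2-r_1)-n_2(x-2)-(2r_1-\mu_i(G_1))(x-1)$ then yields exactly $x^3-(3+r_1+n_2)x^2+(2+r_1+\mu_i(G_1)+2n_2)x-\mu_i(G_1)$, which is the claimed form.

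I do not expect a genuine obstacle beyond careful bookkeeping; the two places where slips are easy are (i) the degree shift $+I_{n_2}$ on the $W$-block, which forces the coronal to be evaluated at $x-1$ rather than at $x$, and (ii) the final clearing of denominators, where one must verify that the spurious factors $(x-1)^{n_1}$ and $(x-2)^{n_1}$ cancel so that the output is genuinely the stated polynomial. The Schur-complement manipulations are valid for $x\neq 1,2$ and for $x$ avoiding the spectrum of $L(G_2)+I_{n_2}$, and the resulting identity of polynomials extends to all $x$ by continuity.
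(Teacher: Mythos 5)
Your proposal is correct and follows essentially the same route as the paper's proof: the same block form of $L(G_1\odot G_2)$, the Schur complement with respect to the $W$-block producing the coronal $\Gamma_{L(G_2)}(x-1)=n_2/(x-1)$, the second Schur complement isolating $RR^T=A(G_1)+r_1I_{n_1}$ with eigenvalues $2r_1-\mu_i(G_1)$, and the final clearing of the $(x-1)$ and $(x-2)$ denominators; your expansion of the cubic checks out. The only cosmetic difference is that you substitute the explicit value of the coronal earlier than the paper does.
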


\begin{proof}
Let $R$ be the incidence matrix \cite{kn:Brouwer12} of $G_1$. Since $G_1$ is an $r_1$-regular graph, we have $D(G_1)=r_1I_{n_1}$. Then the Laplacian matrix of $G_1\odot G_2$ can be written as
\[L(G_1\odot G_2)=\bmat{
                          (r_1+n_2)I_{n_1} & -R & -\mathbf{1}_{n_2}^T\otimes I_{n_1} \\[0.2cm]
                          -R^T & 2I_{m_1} & 0_{m_1\times n_1n_2}\\[0.2cm]
                         - \mathbf{1}_{n_2}\otimes I_{n_1} & 0_{n_1n_2\times m_1} & (L(G_2)+I_{n_2})\otimes I_{n_1}
                       },\]
Thus the Laplacian characteristic polynomial of $G_1\odot G_2$ is given by
\begin{eqnarray*}
\begin{aligned}
\phi\left(L(G_1\odot G_2)\right)
 &=\det\bmat{
                          (x-r_1-n_2)I_{n_1} & R & \mathbf{1}_{n_2}^T\otimes I_{n_1} \\[0.2cm]
                          R^T & (x-2)I_{m_1} & 0_{m_1\times n_1n_2}\\[0.2cm]
                          \mathbf{1}_{n_2}\otimes I_{n_1} & 0_{n_1n_2\times m_1} & ((x-1)I_{n_2}-L(G_2))\otimes I_{n_1}
                       } \\ %[0.2cm]
 &=\det(((x-1)I_{n_2}-L(G_2))\otimes I_{n_1})\cdot\det(S) ,
\end{aligned}
\end{eqnarray*}
where
\begin{eqnarray*}
S=\pmat{
          \left(x-r_1-n_2-\Gamma_{L(G_2)}(x-1)\right)I_{n_1}  & R \\[0.2cm]
         R^T & (x-2)I_{m_1}
       }
\end{eqnarray*}
is the Schur complement\cite{kn:Schur} of $((x-1)I_{n_2}-L(G_2))\otimes I_{n_1}$. Notice that $\lambda_i(G_1)=r_1-\mu_i(G_1), i=1,2,\ldots,n_1$. Then the result follows from
\begin{eqnarray*}
\begin{aligned}
\det\big(((x-1)I_{n_2}-L(G_2))\otimes I_{n_1}\big) &= \prod_{i=1}^{n_2}\big(x-1-\mu_i(G_2)\big)^{n_1},
\end{aligned}
\end{eqnarray*}
and
\begin{eqnarray*}
\begin{aligned}
\det (S) &=(x-2)^{m_1}\cdot\det\left(\left(x-r_1-n_2-\Gamma_{L(G_2)}(x-1)\right)I_{n_1}-\frac{1}{x-2}RR^T\right)\\
&=(x-2)^{m_1}\cdot\prod_{i=1}^{n_1}\left(x-r_1-n_2-\Gamma_{L(G_2)}(x-1)-\frac{\lambda_i(G_1)+r_1}{x-2}\right) \\
&=(x-2)^{m_1-n_1}\cdot\prod_{i=1}^{n_1}\Big(x^2-(2+r_1+n_2+\Gamma_{L(G_2)}(x-1))x+2n_2+2\Gamma_{L(G_2)}(x-1)+\mu_i(G_1)\Big). \\
&=(x-1)^{-n_1}\cdot(x-2)^{m_1-n_1}\cdot\prod_{i=1}^{n_1}\Big(x^3-(3+r_1+n_2)x^2+(2+r_1+\mu_i(G_1)+2n_2)x-\mu_i(G_1)\Big). \\
\end{aligned}
\end{eqnarray*}
\qed
\end{proof}

%Theorem \ref{CoronaTh2} implies the following result.
%\begin{cor}\label{ACorRegular4}
%Let $G_1$ be an $r_1$-regular graph on $n_1$ vertices and $m_1$ edges, and $G_2$ an $r_2$-regular graph on $n_2$ vertices. Then the $L$-spectrum of $G_1\odot G_2$ consists of:
%\begin{itemize}
%  \item[\rm (a)] $\mu_i(G_2)+1$, repeated $n_1$ times, for each $i=2,3,\ldots,n_2$;
%  \item[\rm (b)] $2$, repeated $m_1-n_1$ times;
% \item[\rm (c)] three roots of the equation
%\[x^3-(3+r_1+n_2)x^2+\big(2+r_1+\mu_j(G_1)+2n_2\big)x-\mu_j(G_1)=0\]
%for each $j=1, 2, \ldots, n_1$.
%\end{itemize}
%\end{cor}

%\begin{proof}
%(a) Since $G_2$ is an $r_2$-regular graph on $n_2$ vertices, by (\ref{eq:GammaTL}) we have
%$$
%\Gamma_{L(G_2)}(x-1) = \frac{n_2}{x-1}.
%$$
%The only pole of $\Gamma_{L(G_2)}(x-1)$ is $x-1=0$, which is equal to $\mu_1(G_2)$. Thus, by Theorem \ref{CoronaTh2}, for each $i=2,3,\ldots,n_2$, $\mu_i(G_2)+1$ is an eigenvalue of $G_1\odot G_2$ repeated $n_1$ times.

%(b) This case can be obtained readily from Theorem \ref{CoronaTh2}.

%(c) The remaining $3n_1$ eigenvalues of $G_1\odot G_2$ are obtained by solving
%\[x^2-(2+r_1+n_2+\frac{n_2}{x-1})x+2n_2+\frac{2n_2}{x-1}+\mu_j(G_1)=0\]
%for each $j=1,2,\ldots,n_1$ .
%\qed
%\end{proof}

Theorem \ref{CoronaTh2} helps us to construct infinitely many pairs of $L$-cospectral graphs.

\begin{cor}\label{ACorRegular6}
\begin{itemize}
  \item[\rm (a)] If $G_1$ and $G_2$ are $L$-cospectral regular graphs, and $H$ is an arbitrary graph, then $G_1\odot H$ and $G_2\odot H$ are $L$-cospectral.
  \item[\rm (b)] If G is a regular graph, and $H_1$ and $H_2$ are $L$-cospectral graphs, then $G\odot H_1$ and $G\odot H_2$ are $L$-cospectral.
\end{itemize}
\end{cor}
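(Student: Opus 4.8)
The plan is to read off both cospectrality results directly from the explicit factorization of the Laplacian characteristic polynomial established in Theorem~\ref{CoronaTh2}. The key observation is that the formula for $\phi(L(G_1\odot G_2);x)$ depends on the factor graphs only through a short list of invariants: the regularity parameter $r_1$ and the number of vertices $n_1$ of $G_1$, the Laplacian eigenvalues $\mu_i(G_1)$, the number of vertices $n_2$ of $G_2$, and the Laplacian eigenvalues $\mu_i(G_2)$ (the number of edges $m_1$ is itself determined by $n_1$ and $r_1$ via $2m_1=r_1n_1$). So two subdivision-vertex coronae have the same Laplacian characteristic polynomial as soon as all of these data agree.

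For part~(a), I would argue as follows. Suppose $G_1$ and $G_2$ are $L$-cospectral regular graphs, and let $H$ be arbitrary. Since $L$-cospectral graphs have the same number of vertices and the same Laplacian spectrum, $G_1$ and $G_2$ share the same $n_1$ and the same multiset $\{\mu_i\}$. The hard part, and really the only thing to check, is that they also share the same regularity constant $r_1$; this follows because for a regular graph the degree equals the largest Laplacian eigenvalue (indeed $r=\mu_n$, or equivalently $r=\frac{1}{n}\sum_i\mu_i$), so equal Laplacian spectra force equal $r_1$. With $n_1$, $r_1$, and the $\mu_i(G_1)$ matching, and with $H$ playing the common role of $G_2$, every factor in the formula of Theorem~\ref{CoronaTh2} coincides for $G_1\odot H$ and $G_2\odot H$; hence the two characteristic polynomials are identical and the coronae are $L$-cospectral.

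For part~(b), let $G$ be regular and let $H_1,H_2$ be $L$-cospectral. Now $G$ contributes the fixed data $n_1$, $r_1$, and $\{\mu_i(G)\}$, while $H_1$ and $H_2$ enter only through their common vertex count $n_2$ and their common Laplacian spectrum $\{\mu_i(H_1)\}=\{\mu_i(H_2)\}$. Inspecting the formula, the factor $\prod_{i=2}^{n_2}\bigl(x-1-\mu_i(G_2)\bigr)^{n_1}$ is unchanged when $H_1$ is replaced by $H_2$, and the remaining two factors depend on $H_1,H_2$ only through $n_2$; therefore $\phi(L(G\odot H_1);x)=\phi(L(G\odot H_2);x)$, giving the claim. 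Note that, unlike the adjacency case in Corollary~\ref{ACorRegular3}(b), no coronal-matching hypothesis is needed here, precisely because $\Gamma_{L(G_2)}(x-1)=n_2/(x-1)$ by~\eqref{eq:GammaTL} is already determined by $n_2$ alone.

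I do not expect any genuine obstacle: both parts are immediate corollaries once one notes which invariants the polynomial sees. The single substantive point worth stating explicitly is the implication, used in part~(a), that $L$-cospectrality of \emph{regular} graphs forces equality of their common degree $r_1$ (and of $n_1$, hence of $m_1$); everything else is direct substitution into the formula of Theorem~\ref{CoronaTh2}.
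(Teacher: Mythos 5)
Your argument is correct and is essentially the paper's own route: the corollary is presented as an immediate consequence of Theorem~\ref{CoronaTh2}, and your identification of exactly which invariants the factorization depends on (including recovering $r_1$, and hence $m_1$, from the Laplacian spectrum of a regular graph, and noting that $\Gamma_{L(G_2)}(x-1)$ is determined by $n_2$ alone) supplies the details the paper leaves implicit. One small slip worth correcting: for a regular graph $r$ is \emph{not} the largest Laplacian eigenvalue (e.g.\ $K_n$ has $\mu_{\max}=n$ while $r=n-1$); fortunately your alternative justification $r=\frac{1}{n}\sum_i\mu_i(G)$ is the correct one and is all the argument needs.
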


Let $\emph{t}(\emph{G})$ denote the number of spanning trees of $\emph{G}$. It is well known \cite{kn:Cvetkovic95} that if $\emph{G}$ is a connected graph on \emph{n} vertices with Laplacian spectrum $0=\mu_1(G)<\mu_2(G)\leq \cdots \leq \mu_n(G)$, then
$$
t(G)=\frac {\mu_2(G) \cdots \mu_n(G)}{n}.
$$
By Theorem \ref{CoronaTh2}, we can readily obtain the following result.

\begin{cor}\label{ACorRegular5}
Let $G_1$ be an $r_1$-regular graph on $n_1$ vertices and $m_1$ edges, and $G_2$ an  arbitrary graph on $n_2$ vertices. Then
$$
t(G_1\odot G_2)=\frac{2^{m_1-n_1}\cdot(2+r_1+2n_2)n_1\cdot t(G_1)\cdot\prod_{i=2}^{n_2}\big(\mu_i(G_2)+1\big)^{n_1}}{n_1+m_1+n_1n_2}.
$$
\end{cor}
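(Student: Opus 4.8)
The plan is to apply the spanning-tree formula $t(G)=\frac{1}{N}\prod_{i=2}^{N}\mu_i$ directly to the Laplacian spectrum of $G_1\odot G_2$ extracted from Theorem~\ref{CoronaTh2}, where $N=n_1(1+n_2)+m_1$ is the total number of vertices. The entire computation reduces to identifying the product of all \emph{nonzero} Laplacian eigenvalues and then dividing by $N$, so the first task is to locate the single zero eigenvalue (which must exist and be simple, since $G_1\odot G_2$ is connected when $G_1$ is, and the formula presupposes this) and confirm that every other factor contributes a nonzero eigenvalue.

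Concretely, I would read off the three groups of factors from Theorem~\ref{CoronaTh2}. The factor $(x-2)^{m_1-n_1}$ contributes the eigenvalue $2$ with multiplicity $m_1-n_1$, giving $2^{m_1-n_1}$. The factor $\prod_{i=2}^{n_2}(x-1-\mu_i(G_2))^{n_1}$ contributes the eigenvalues $1+\mu_i(G_2)$ for $i=2,\dots,n_2$, each with multiplicity $n_1$; since the product over $i=2,\dots,n_2$ of these is $\prod_{i=2}^{n_2}(1+\mu_i(G_2))^{n_1}$, this matches the corresponding factor in the claimed formula. The remaining factor is $\prod_{i=1}^{n_1}\big(x^3-(3+r_1+n_2)x^2+(2+r_1+\mu_i(G_1)+2n_2)x-\mu_i(G_1)\big)$, a product of $n_1$ cubics. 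The key observation is that the \emph{constant term} of the cubic indexed by $i$ is $-\mu_i(G_1)$, so by Vieta the product of its three roots equals $\mu_i(G_1)$.

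The crux, and the one genuinely delicate point, is isolating the zero eigenvalue among the $n_1$ cubics. For $i=1$ we have $\mu_1(G_1)=0$, so that cubic has constant term $0$ and therefore factors as $x\cdot\big(x^2-(3+r_1+n_2)x+(2+r_1+2n_2)\big)$; the linear factor $x$ is exactly the unique zero eigenvalue of $G_1\odot G_2$, and its removal accounts for the division by $N$ in the formula. The product of the remaining two roots of this $i=1$ cubic is the constant term of the quadratic, namely $2+r_1+2n_2$, which is precisely the factor $(2+r_1+2n_2)$ appearing in the numerator of the claim. For $i=2,\dots,n_1$ we have $\mu_i(G_1)>0$ (again using connectivity of $G_1$), so the full product of roots is $\mu_i(G_1)\neq 0$, and none of these cubics contributes a zero. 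Hence $\prod_{i=2}^{n_1}\mu_i(G_1)=n_1\,t(G_1)$ by the spanning-tree formula applied to $G_1$ itself.

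Assembling the pieces, the product of all nonzero Laplacian eigenvalues of $G_1\odot G_2$ equals
\begin{eqnarray*}
2^{m_1-n_1}\cdot\prod_{i=2}^{n_2}\big(1+\mu_i(G_2)\big)^{n_1}\cdot(2+r_1+2n_2)\cdot\prod_{i=2}^{n_1}\mu_i(G_1),
\end{eqnarray*}
and substituting $\prod_{i=2}^{n_1}\mu_i(G_1)=n_1\,t(G_1)$ and dividing by $N=n_1+m_1+n_1n_2$ yields the stated expression for $t(G_1\odot G_2)$. The only subtlety requiring care is the bookkeeping of which root of the $i=1$ cubic is the zero eigenvalue versus which two nonzero roots survive; once the cubic is factored as $x$ times a quadratic whose constant term is read off by Vieta, the rest is routine substitution.
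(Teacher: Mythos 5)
Your proposal is correct and follows exactly the route the paper intends: the paper states that Corollary \ref{ACorRegular5} is "readily obtained" from Theorem \ref{CoronaTh2} together with the formula $t(G)=\mu_2(G)\cdots\mu_n(G)/n$, and your computation — extracting the eigenvalue $2$ with multiplicity $m_1-n_1$, the eigenvalues $1+\mu_i(G_2)$ each with multiplicity $n_1$, splitting off the factor $x$ from the $i=1$ cubic to isolate the zero eigenvalue and reading off $2+r_1+2n_2$ as the product of its remaining roots, and using Vieta plus $\prod_{i=2}^{n_1}\mu_i(G_1)=n_1\,t(G_1)$ for the other cubics — is precisely the omitted verification. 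No gaps.
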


The \emph{Kirchhoff index} of a graph $G$, denoted by $Kf(G)$, is defined as the sum of resistance distances between all pairs of vertices \cite{kn:Bonchev94,kn:Klein93}. At almost exactly the same time, Gutman et al. \cite{kn:Gutman96} and Zhu et al. \cite{kn:zhu96} proved that the Kirchhoff index of a connected graph $G$ with $n\,(n\geq2)$ vertices can be expressed as $$Kf(G)= n\sum_{i=2}^{n}\frac{1}{\mu_i(G)},$$
where $\mu_2(G), \ldots, \mu_n(G)$ are the non-zero Laplacian eigenvalues of $G$. Theorem \ref{CoronaTh2} implies the following result.

\begin{cor}\label{ACorRegular51}
Let $G_1$ be an $r_1$-regular graph on $n_1$ vertices and $m_1$ edges, and $G_2$ an arbitrary graph on $n_2$ vertices. Then
\begin{eqnarray*}
\begin{aligned}
Kf(G_1\odot G_2) &= \Big(n_1(1+n_2)+m_1\Big)\\
&\quad\times\left(\frac{m_1+n_1-2}{2}+\frac{3+r_1+n_2}{2+r_1+2n_2}+\frac{2+r_1+2n_2}{n_1}\cdot Kf(G_1)+\sum_{i=2}^{n_2}\frac{n_1}{1+\mu_i(G_2)}\right).
\end{aligned}
\end{eqnarray*}
\end{cor}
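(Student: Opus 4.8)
The plan is to apply the spectral formula $Kf(G)=n\sum_{i=2}^{n}1/\mu_i(G)$ directly to $G=G_1\odot G_2$, reading off the complete list of Laplacian eigenvalues from the factorization of $\phi\left(L(G_1\odot G_2);x\right)$ established in Theorem \ref{CoronaTh2}. The graph $G_1\odot G_2$ has $n=n_1(1+n_2)+m_1$ vertices, and (assuming $G_1$ connected, so that $G_1\odot G_2$ is connected) it has exactly one zero Laplacian eigenvalue; the task then reduces to summing the reciprocals of all the remaining eigenvalues and multiplying the result by $n$.

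First I would sort the eigenvalues into the three blocks coming from the three factors in Theorem \ref{CoronaTh2}. The factor $(x-2)^{m_1-n_1}$ contributes the eigenvalue $2$ with multiplicity $m_1-n_1$, whose reciprocals sum to $(m_1-n_1)/2$. The factor $\prod_{i=2}^{n_2}(x-1-\mu_i(G_2))^{n_1}$ contributes the eigenvalues $1+\mu_i(G_2)$, each with multiplicity $n_1$; since $\mu_i(G_2)\ge 0$ these are all positive, so their reciprocals sum to $n_1\sum_{i=2}^{n_2}1/(1+\mu_i(G_2))$.

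The remaining block, and the only delicate one, is $\prod_{i=1}^{n_1}\bigl(x^3-(3+r_1+n_2)x^2+(2+r_1+\mu_i(G_1)+2n_2)x-\mu_i(G_1)\bigr)$. Here I would treat the index $i=1$ (where $\mu_1(G_1)=0$) separately from $i=2,\ldots,n_1$: for $i=1$ the cubic factors as $x\cdot\bigl(x^2-(3+r_1+n_2)x+(2+r_1+2n_2)\bigr)$, supplying the single zero eigenvalue together with two positive roots whose reciprocal sum, by Vieta, equals $(3+r_1+n_2)/(2+r_1+2n_2)$. For each $i\ge 2$ the three roots are all nonzero with product $\mu_i(G_1)$ and pairwise-product-sum $2+r_1+\mu_i(G_1)+2n_2$, so the reciprocal sum of the roots of the $i$th cubic is $\bigl(2+r_1+\mu_i(G_1)+2n_2\bigr)/\mu_i(G_1)=1+(2+r_1+2n_2)/\mu_i(G_1)$.

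Finally I would add the three blocks. Summing this last expression over $i=2,\ldots,n_1$ and using $\sum_{i=2}^{n_1}1/\mu_i(G_1)=Kf(G_1)/n_1$ turns the cubic block into $(n_1-1)+\tfrac{2+r_1+2n_2}{n_1}Kf(G_1)+\tfrac{3+r_1+n_2}{2+r_1+2n_2}$; combining the constant $(n_1-1)$ with the $(m_1-n_1)/2$ from the first block gives $(m_1+n_1-2)/2$, after which multiplying the total reciprocal sum by $n=n_1(1+n_2)+m_1$ yields the claimed formula. The main obstacle is purely bookkeeping: correctly isolating the zero eigenvalue inside the $i=1$ cubic and then applying Vieta's relations uniformly to the other cubics, since a sign slip in the constant term $-\mu_i(G_1)$ (hence in the product of roots) would corrupt every reciprocal sum.
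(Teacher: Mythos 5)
Your proposal is correct and follows exactly the route the paper intends: the paper gives no explicit proof of this corollary beyond "Theorem \ref{CoronaTh2} implies the following result," and your computation---reading off the eigenvalues $2$ and $1+\mu_i(G_2)$, isolating the zero root of the $i=1$ cubic, and applying Vieta's relations to each cubic factor before combining $(m_1-n_1)/2$ with $(n_1-1)$---is precisely the intended derivation. All the bookkeeping checks out, including the sign of the constant term $-\mu_i(G_1)$ and the reassembly into $(m_1+n_1-2)/2$.
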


\subsection{$Q$-spectra of subdivision-vertex coronae}
\label{sec:SVCoronaQ}

\begin{thm}\label{CoronaTh3}
Let $G_1$ be an $r_1$-regular graph on $n_1$ vertices and $m_1$ edges, and $G_2$ an arbitrary graph on $n_2$ vertices. Then
\begin{eqnarray*}
\begin{aligned}
\phi\left(Q(G_1 \odot G_2);x\right)
&=(x-2)^{m_1-n_1}\cdot\prod_{i=1}^{n_2}\big(x-1-\nu_i(G_2)\big)^{n_1}\\
&\quad\cdot\prod_{i=1}^{n_1}\Big(x^2-(2+r_1+n_2+\Gamma_{Q(G_2)}(x-1))x+2(r_1+n_2+\Gamma_{Q(G_2)}(x-1))-\nu_i(G_1)\Big).\\
\end{aligned}
\end{eqnarray*}
\end{thm}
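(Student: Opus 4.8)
The plan is to run the same two-step Schur-complement argument used for Theorems~\ref{CoronaTh1} and~\ref{CoronaTh2}, now for the signless Laplacian $Q=D+A$. First I would write $Q(G_1\odot G_2)$ in block form with respect to the partition $V(G_1)\cup I(G_1)\cup[W_1\cup\cdots\cup W_{n_2}]$. The off-diagonal (adjacency) blocks are identical to those of $A(G_1\odot G_2)$ in the proof of Theorem~\ref{CoronaTh1}, and the diagonal now carries the degree matrix. Reading off the degrees listed before Section~\ref{sec:SVCoronaA}, the diagonal blocks are $(r_1+n_2)I_{n_1}$ on $V(G_1)$, $2I_{m_1}$ on $I(G_1)$, and $(D(G_2)+I_{n_2})\otimes I_{n_1}$ on the copies of $G_2$; since $d_{G_1\odot G_2}(u_j^i)=d_{G_2}(u_j)+1$, the bottom-right block of $Q(G_1\odot G_2)$ is $(Q(G_2)+I_{n_2})\otimes I_{n_1}$. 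The extra $I_{n_2}$ here is exactly what shifts the coronal's argument to $x-1$.

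With $\phi(Q(G_1\odot G_2))=\det(xI-Q(G_1\odot G_2))$ in hand, I would take the Schur complement of the bottom-right block $((x-1)I_{n_2}-Q(G_2))\otimes I_{n_1}$. Using the Kronecker-product inverse and $\mathbf{1}_{n_2}^T((x-1)I_{n_2}-Q(G_2))^{-1}\mathbf{1}_{n_2}=\Gamma_{Q(G_2)}(x-1)$, precisely as in the proof of Theorem~\ref{CoronaTh2}, the two $\mathbf{1}_{n_2}$-blocks collapse into a scalar multiple of $I_{n_1}$, leaving
\[S=\pmat{
          \left(x-r_1-n_2-\Gamma_{Q(G_2)}(x-1)\right)I_{n_1} & -R \\[0.2cm]
         -R^T & (x-2)I_{m_1}
       },\]
while the eliminated block contributes $\det(((x-1)I_{n_2}-Q(G_2))\otimes I_{n_1})=\prod_{i=1}^{n_2}(x-1-\nu_i(G_2))^{n_1}$.

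To finish, I would apply a second Schur complement to $S$, now with respect to $(x-2)I_{m_1}$, splitting off $(x-2)^{m_1}$ and leaving $\det\big((x-r_1-n_2-\Gamma_{Q(G_2)}(x-1))I_{n_1}-\tfrac{1}{x-2}RR^T\big)$. The only genuinely $Q$-specific step is handling $RR^T$: the standard identity $RR^T=A(G_1)+r_1I_{n_1}$ combined with $r_1$-regularity gives $RR^T=Q(G_1)$, so its eigenvalues are exactly the signless Laplacian eigenvalues $\nu_i(G_1)$. (This is slightly cleaner than the Laplacian proof, which routed through $\lambda_i(G_1)=r_1-\mu_i(G_1)$.) Substituting $\nu_i(G_1)$, expanding $(x-2)\big(x-r_1-n_2-\Gamma_{Q(G_2)}(x-1)\big)$, and multiplying the two factors together gives the claimed polynomial.

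I do not anticipate a real obstacle: the template is fixed and the remaining work is bookkeeping. The one point worth flagging is why this formula is \emph{not} reduced further the way Theorem~\ref{CoronaTh2} was. There the coronal simplified through $\Gamma_{L(G_2)}(x)=n_2/x$ because every row sum of $L(G_2)$ vanishes. For the signless Laplacian the row sums of $Q(G_2)$ equal $2d_{G_2}(u_j)$, which is constant only when $G_2$ is regular, so $\Gamma_{Q(G_2)}(x-1)$ admits no such universal simplification; the statement must therefore keep $\Gamma_{Q(G_2)}(x-1)$ explicit. The only care needed in the write-up is to track the $x-1$ shift and the factor of two in the $2(r_1+n_2+\Gamma_{Q(G_2)}(x-1))$ term when clearing the $(x-2)$ denominator.
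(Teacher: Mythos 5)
Your proposal is correct and follows exactly the route the paper intends: the paper's own proof writes down the same block form of $Q(G_1\odot G_2)$ and then omits the details as "similar to Theorem \ref{CoronaTh2}", which is precisely the two-step Schur-complement computation you carry out, including the key simplification $RR^T=A(G_1)+r_1I_{n_1}=Q(G_1)$ with eigenvalues $\nu_i(G_1)$. Your closing remark about why $\Gamma_{Q(G_2)}(x-1)$ cannot be simplified away (row sums of $Q(G_2)$ are not constant unless $G_2$ is regular) correctly explains the difference in the final form compared with Theorem \ref{CoronaTh2}.
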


\begin{proof}
Let $R$ be the incidence matrix \cite{kn:Brouwer12} of $G_1$. Then the signless Laplacian matrix of $G_1\odot G_2$ can be written as
\[Q(G_1\odot G_2)=\bmat{
                          (r_1+n_2)I_{n_1} & R & \mathbf{1}_{n_2}^T\otimes I_{n_1} \\[0.2cm]
                          R^T & 2I_{m_1} & 0_{m_1\times n_1n_2}\\[0.2cm]
                          \mathbf{1}_{n_2}\otimes I_{n_1} & 0_{n_1n_2\times m_1} & (Q(G_2)+I_{n_2})\otimes I_{n_1}
                       }.\]
The rest of the proof is similar to that of Theorem \ref{CoronaTh2} and hence we omit details.
\qed
\end{proof}
%\begin{proof}
%(a)Since $G_2$ is $r_2$-regular on $n_2$ vertices, by \cite{kn:McLeman11,kn:Cui12} we have
%$$
%\Gamma_{Q(G_2)}(x-1) = \frac{n_2}{x-1-2r_2}.
%$$
%The only pole of $\Gamma_{Q(G_2)}(x-1)$ is $x-1=2r_2$, which is equal to $\nu_{n_2}(G_2)$. Thus, by Theorem \ref{CoronaTh6}, for each $i=1,2,\ldots,n_2-1$, $\nu_i(G_2)+1$ is an eigenvalue of $G_1\circleddash G_2$ repeated $m_1$ times.

%(b)We can obtain $3n_1$ eigenvalues of $G_1\circleddash G_2$ by solving
%\[x^2-(2+r_1+n_2+\frac{n_2}{x-1-2r_2})x+(2+n_2+\frac{n_2}{x-1-2r_2})r_1-\nu_j(G_1)=0\]
%for each $j=1,2,\ldots,n_1$.

%(c)The remaining  eigenvalues of $G_1\circleddash G_2$ are obtained by solving
%\[x-2-n_2-\frac{n_2}{x-1-2r_2}=0\]
%for each repeated $m_1-n_1$ times.
%\qed
%\end{proof}
By applying (\ref{eq:GammaT}), Theorem \ref{CoronaTh3} implies the following result.

\begin{cor}\label{ACorRegular7}
Let $G_1$ be an $r_1$-regular graph on $n_1$ vertices and $m_1$ edges, and $G_2$ an $r_2$-regular graph on $n_2$ vertices. Then
\begin{eqnarray*}
%\begin{split}
\begin{aligned}
\phi\left(Q(G_1 \odot G_2);x\right)
=(x-2)^{m_1-n_1}\cdot\prod_{i=1}^{n_2}\big(x-1-\nu_i(G_2)\big)^{n_1}
\cdot\prod_{i=1}^{n_1}\Big(x^3-ax^2+bx+c\Big).\\
\end{aligned}
%\end{split}
\end{eqnarray*}
where $a=3+r_1+2r_2+n_2$, $b=2+3r_1+2r_1r_2+4r_2+2r_2n_2+2n_2-\nu_i(G_1)$, $c=-2r_1-4r_1r_2-4r_2n_2+\nu_i(G_1)+2r_2\nu_i(G_1)$.
\end{cor}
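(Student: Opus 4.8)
The plan is to obtain Corollary \ref{ACorRegular7} as a direct specialization of Theorem \ref{CoronaTh3}, the only new input being an explicit evaluation of the coronal $\Gamma_{Q(G_2)}$. Since $G_2$ is $r_2$-regular, $Q(G_2)=D(G_2)+A(G_2)=r_2I_{n_2}+A(G_2)$ has every row sum equal to $2r_2$. Hence (\ref{eq:GammaT}) with $t=2r_2$ gives $\Gamma_{Q(G_2)}(x)=n_2/(x-2r_2)$, so that $\Gamma_{Q(G_2)}(x-1)=n_2/(x-1-2r_2)$. First I would substitute this rational expression into each of the $n_1$ quadratic factors appearing in Theorem \ref{CoronaTh3}.

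Writing $\Gamma=n_2/(x-1-2r_2)$, the $i$th factor is $f_i(x)=x^2-(2+r_1+n_2+\Gamma)x+2(r_1+n_2+\Gamma)-\nu_i(G_1)$. I would group the two terms carrying $\Gamma$ into $\Gamma(2-x)$ and then clear the denominator by multiplying through by $(x-1-2r_2)$; using $(x-1-2r_2)\Gamma=n_2$ this yields
\begin{eqnarray*}
(x-1-2r_2)f_i(x)&=&(x-1-2r_2)\Big[x^2-(2+r_1+n_2)x+2(r_1+n_2)-\nu_i(G_1)\Big]+n_2(2-x),
\end{eqnarray*}
a genuine cubic in $x$. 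Expanding and collecting by degree should reproduce exactly $x^3-ax^2+bx+c$ with the stated $a$, $b$, $c$; the $x^2$-coefficient is immediate, while the $x$-coefficient and the constant term require the products $(1+2r_2)(2+r_1+n_2)$ and $(1+2r_2)\big(2(r_1+n_2)-\nu_i(G_1)\big)$. This expansion is the routine but bookkeeping-heavy heart of the argument, and it is the step I would check most carefully.

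Finally I would reconcile the denominators. Clearing $(x-1-2r_2)$ from all $n_1$ quadratic factors multiplies the product by $(x-1-2r_2)^{-n_1}$, that is, $\prod_{i=1}^{n_1}f_i(x)=(x-1-2r_2)^{-n_1}\prod_{i=1}^{n_1}(x^3-ax^2+bx+c)$. The compensating factor is again furnished by the regularity of $G_2$: the all-ones vector shows that $2r_2\in\Spec(Q(G_2))$, so the factor $\prod_{i=1}^{n_2}(x-1-\nu_i(G_2))^{n_1}$ of Theorem \ref{CoronaTh3} contains a copy of $(x-1-2r_2)^{n_1}$, which cancels the denominator (after which the surviving product runs over the eigenvalues $\nu_i(G_2)\ne 2r_2$, so that the total degree matches the order $n_1(1+n_2)+m_1$ of $G_1\odot G_2$). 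Assembling the three pieces gives the claimed factorisation. I expect no conceptual difficulty here: this cancellation is exactly parallel to the way $(x-1)^{-n_1}$ absorbs the eigenvalue $\mu_1(G_2)=0$ in the proof of Theorem \ref{CoronaTh2}, and the only real risk is an arithmetic slip in the cubic's coefficients.
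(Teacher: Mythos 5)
Your proposal is correct and is exactly the intended derivation: the paper offers no details beyond ``by applying (\ref{eq:GammaT})'', and your substitution $\Gamma_{Q(G_2)}(x-1)=n_2/(x-1-2r_2)$, the clearing of denominators, and the expansion all check out (I verified the coefficients $a$, $b$, $c$). One substantive remark: your final degree count is not just a sanity check but actually exposes a misprint in the stated corollary. After the factor $(x-1-2r_2)^{-n_1}$ is absorbed by one copy of $(x-1-2r_2)^{n_1}$ coming from the eigenvalue $\nu_{n_2}(G_2)=2r_2$, the surviving product must run over only $n_2-1$ eigenvalues, i.e.\ it should read $\prod_{i=1}^{n_2-1}\big(x-1-\nu_i(G_2)\big)^{n_1}$ rather than $\prod_{i=1}^{n_2}$; as you note, only then does the total degree equal $n_1(1+n_2)+m_1$. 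This is consistent with the parallel Corollary \ref{ACorRegular15} for the subdivision-edge corona, where the paper does exclude the top eigenvalue. So you have proved the corrected statement, which is the one that is actually true.
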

Theorem \ref{CoronaTh3} enables us to construct infinitely many pairs of $Q$-cospectral graphs.

\begin{cor}\label{ACorRegular8}
\begin{itemize}
  \item[\rm (a)] If $G_1$ and $G_2$ are $Q$-cospectral regular graphs, and $H$ is any graph, then $G_1\odot H$ and $G_2\odot H$ are $Q$-cospectral.
  \item[\rm (b)] If G is a regular graph, and $H_1$ and $H_2$ are $Q$-cospectral graphs with $\Gamma_{Q(H_1)}(x)=\Gamma_{Q(H_2)}(x)$, then $G\odot H_1$ and $G\odot H_2$ are $Q$-cospectral.
 % \item[\rm (c)] If $G_1$ and $G_2$ are $Q$-cospectral regular graphs, and $H_1$ and $H_2$ are $Q$-cospectral graphs with $\Gamma_Q(H_1)(x)=\Gamma_Q(H_2)(x)$ ,then $G\odot H_1$ and $G\odot H_2$ are $Q$-cospectral.

\end{itemize}
\end{cor}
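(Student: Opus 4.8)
The plan is to read off both claims directly from the factored expression for $\phi\left(Q(G_1\odot G_2);x\right)$ supplied by Theorem \ref{CoronaTh3}, by first pinning down exactly which invariants of the two factor graphs enter that formula. Inspecting it, the polynomial depends on the outer (regular) graph only through the triple $(n_1,m_1,r_1)$ together with the signless Laplacian eigenvalues $\nu_i(G_1)$, and on the corona factor only through $n_2$, the eigenvalues $\nu_i(G_2)$, and the coronal $\Gamma_{Q(G_2)}(x-1)$. So for each part I would simply check that all of these data agree for the two coronae being compared, whence the characteristic polynomials coincide and the graphs are $Q$-cospectral.

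For part (a), here $G_1$ and $G_2$ play the role of the regular outer graph while $H$ is the common factor. First I would observe that two $Q$-cospectral regular graphs automatically share the same order $n$, size $m$, and degree $r$: equality of spectra forces equal $n$; the trace of $Q$ equals $\sum_v d(v)=2m$, forcing equal $m$; and $nr=2m$ then forces equal $r$. Consequently the triples $(n_1,m_1,r_1)$ coincide and $\{\nu_i(G_1)\}=\{\nu_i(G_2)\}$ as multisets, while the factor $H$ being literally the same graph makes $n_2$, $\{\nu_i(H)\}$ and $\Gamma_{Q(H)}(x-1)$ identical in both instances. Substituting into Theorem \ref{CoronaTh3} then yields $\phi\left(Q(G_1\odot H);x\right)=\phi\left(Q(G_2\odot H);x\right)$.

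For part (b), now $G$ is the fixed regular outer graph and $H_1,H_2$ are the factors, so the outer data $(n,m,r)$ and $\{\nu_i(G)\}$ are automatically common. Thus I only need the three $H$-dependent quantities to match: the order $n_{H_j}$, the multiset $\{\nu_i(H_j)\}$, and the coronal $\Gamma_{Q(H_j)}(x-1)$. The $Q$-cospectrality of $H_1$ and $H_2$ delivers the first two (equal order and equal $Q$-spectrum), and the standing hypothesis $\Gamma_{Q(H_1)}(x)=\Gamma_{Q(H_2)}(x)$ delivers the third; Theorem \ref{CoronaTh3} again gives identical polynomials.

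The one genuinely delicate point — and the reason the two parts carry different hypotheses — is that the coronal $\Gamma_{Q(G_2)}(x)$ is \emph{not} an invariant of the $Q$-spectrum alone: it encodes how $\mathbf{1}_{n_2}$ sits relative to the eigenspaces of $Q(G_2)$, which two $Q$-cospectral graphs need not share. In part (a) this causes no difficulty because the factor $H$ is unchanged, so its coronal is trivially fixed; in part (b) it is exactly why the extra condition $\Gamma_{Q(H_1)}=\Gamma_{Q(H_2)}$ must be assumed. I would state this explicitly so that the asymmetry between the two parts is accounted for rather than left implicit.
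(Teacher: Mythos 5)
Your proposal is correct and is exactly the argument the paper intends: the corollary is stated as an immediate consequence of Theorem \ref{CoronaTh3}, and your verification that all quantities entering that formula ($n_1$, $m_1$, $r_1$, the $\nu_i(G_1)$, together with $n_2$, the $\nu_i(G_2)$, and $\Gamma_{Q(G_2)}(x-1)$) are determined by the stated hypotheses — including the trace argument recovering $m$ and $r$ from $Q$-cospectrality plus regularity — is the right way to make that implication explicit. Your remark on why the coronal hypothesis is needed in part (b) but not in part (a) is a correct and worthwhile clarification of a point the paper leaves implicit.
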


\section{Spectra of subdivision-edge coronae}\label{SEC:spec}

We label the vertices of $G_1\circleddash G_2$ as follows.
Let $V(G_1)=\left\{v_1,v_2,\ldots,v_{n_1}\right\}$,
$I(G_1)=\left\{e_1,e_2,\ldots,e_{m_1}\right\}$ and
$V(G_2)=\left\{u_1,u_2,\ldots,u_{n_2}\right\}$. For $i = 1, 2,
\ldots, m_1$, let $u_1^i,u_2^i,\ldots,u_{n_2}^i$ denote the vertices
of the $i$th copy of $G_2$, with the understanding that $u_j^i$ is
the copy of $u_j$ for each $j$. Denote
$W_j=\left\{u_j^1,u_j^2,\ldots,u_j^{m_1}\right\}$ for $j = 1, 2,
\ldots, n_2$.
Then $$V(G_1)\cup I(G_1) \cup \left[W_1\cup W_2\cup\cdots\cup W_{n_2}\right]$$ is a
partition of $V(G_1\circleddash G_2)$. It is clear that the degrees
of the vertices of $G_1\circleddash G_2$ are: $d_{G_1\circleddash G_2}(v_i)= d_{G_1}(v_i)$ for $i=1,2,\ldots,n_1$,
$d_{G_1\circleddash G_2}(e_i)= 2+n_2$ for $i=1,2,\ldots,m_1$, and
$d_{G_1\circleddash G_2}(u_j^i)=d_{G_2}(u_j)+1$ for $i=1,2,\ldots,m_1,\, j=1,2,\ldots,n_2$.

\subsection{$A$-spectra of subdivision-edge coronae}
\label{sec:SECoronaA}

\begin{thm}\label{CoronaTh4}
Let $G_1$ be an $r_1$-regular graph on $n_1$ vertices and $m_1$ edges, and $G_2$ an arbitrary graph on $n_2$ vertices. Then
\begin{eqnarray*}
\phi \left(A(G_1 \circleddash G_2);x\right)
=\big(\phi(A(G_2);x)\big)^{m_1}\cdot\big(x-\Gamma_{A(G_2)}(x)\big)^{m_1-n_1}\cdot\prod_{i=1}^{n_1}\Big(x^2-\Gamma_{A(G_2)}(x)x-\lambda_i(G_1)-r_1\Big).
 \end{eqnarray*}
\end{thm}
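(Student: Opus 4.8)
The plan is to mimic the structure of the proof of Theorem~\ref{CoronaTh1}, since the subdivision-edge corona differs from the subdivision-vertex corona only in \emph{which} vertices of $\mathcal{S}(G_1)$ receive the copies of $G_2$: here the inserted vertices $I(G_1)$ (there are $m_1$ of them) are joined to the copies, rather than the original vertices $V(G_1)$. First I would write down the adjacency matrix of $G_1\circleddash G_2$ with respect to the partition $V(G_1)\cup I(G_1)\cup[W_1\cup\cdots\cup W_{n_2}]$. With $R$ the incidence matrix of $G_1$, the top-left $(n_1+m_1)\times(n_1+m_1)$ block should again record the $\mathcal{S}(G_1)$-adjacencies via $R$ and $R^T$, but now the coupling to the $G_2$-copies sits in the $I(G_1)$ rows/columns. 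Concretely I expect
\begin{eqnarray*}
A(G_1\circleddash G_2)=\bmat{
0_{n_1\times n_1} & R & 0_{n_1\times m_1 n_2}\\[0.2cm]
R^T & 0_{m_1\times m_1} & \mathbf{1}_{n_2}^T\otimes I_{m_1}\\[0.2cm]
0_{m_1 n_2\times n_1} & \mathbf{1}_{n_2}\otimes I_{m_1} & A(G_2)\otimes I_{m_1}
},
\end{eqnarray*}
the key change from Theorem~\ref{CoronaTh1} being that the $G_2$-block is now tensored with $I_{m_1}$ (one copy per inserted vertex) and attaches to the $I(G_1)$ block.

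Next I would form $\phi(A(G_1\circleddash G_2))=\det(xI-A(G_1\circleddash G_2))$ and peel off the $G_2$-block by a Schur-complement reduction, exactly as before. Factoring out $\det\big((xI_{n_2}-A(G_2))\otimes I_{m_1}\big)=\big(\phi(A(G_2);x)\big)^{m_1}$ (using the Kronecker determinant rule from the introduction) leaves the Schur complement
\begin{eqnarray*}
S=\pmat{
xI_{n_1} & -R\\[0.2cm]
-R^T & \big(x-\Gamma_{A(G_2)}(x)\big)I_{m_1}
},
\end{eqnarray*}
where the $\Gamma_{A(G_2)}(x)$ term now appears in the $I(G_1)$-block (the $m_1\times m_1$ corner) because that is where the $\mathbf{1}_{n_2}$-couplings live; this is the one genuine asymmetry with the subdivision-vertex case, where the coronal shifted the $V(G_1)$-block instead.

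Then I would compute $\det(S)$ by a second Schur complement, this time eliminating the $xI_{n_1}$ block. Setting $y=x-\Gamma_{A(G_2)}(x)$, I get $\det(S)=x^{n_1}\det\!\big(yI_{m_1}-\tfrac1x R^T R\big)$. The needed ingredient is the spectrum of $R^T R$: since $RR^T=A(G_1)+r_1I_{n_1}$ has eigenvalues $\lambda_i(G_1)+r_1$ ($1\le i\le n_1$), and $R^TR$ shares these nonzero eigenvalues with $RR^T$ while contributing $m_1-n_1$ extra zero eigenvalues, $\det(yI_{m_1}-\tfrac1xR^TR)$ factors as $y^{\,m_1-n_1}\prod_{i=1}^{n_1}\big(y-\tfrac{\lambda_i(G_1)+r_1}{x}\big)$. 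Multiplying through by $x^{n_1}$ and substituting $y=x-\Gamma_{A(G_2)}(x)$ yields precisely the $\big(x-\Gamma_{A(G_2)}(x)\big)^{m_1-n_1}$ factor together with $\prod_{i=1}^{n_1}\big(x^2-\Gamma_{A(G_2)}(x)x-\lambda_i(G_1)-r_1\big)$. The main obstacle — and it is the only real subtlety — is to justify the eigenvalue transfer from $RR^T$ (size $n_1$) to $R^TR$ (size $m_1$), namely that the larger matrix inherits the same nonzero eigenvalues plus exactly $m_1-n_1$ zeros; this is why the exponent on the coronal factor is $m_1-n_1$ and why the first two blocks are reduced in the order shown (eliminating the $n_1\times n_1$ block last keeps the $m_1$-dimensional structure intact). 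Everything else is a routine determinant manipulation parallel to Theorem~\ref{CoronaTh1}.
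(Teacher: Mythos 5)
Your proposal is correct and follows essentially the same route as the paper: the same block form of $A(G_1\circleddash G_2)$, the same first Schur complement yielding $\big(\phi(A(G_2);x)\big)^{m_1}$ and the matrix $S$ with the coronal shift in the $m_1\times m_1$ corner, and the same reduction of $\det(S)$ to the spectrum of $R^TR$. The only cosmetic difference is that you justify $\mathrm{Spec}(R^TR)$ via the general fact that $R^TR$ and $RR^T$ share nonzero eigenvalues (with $m_1-n_1$ extra zeros), whereas the paper invokes $R^TR=A(\mathcal{L}(G_1))+2I_{m_1}$ and the known line-graph spectrum of a regular graph; these are equivalent.
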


\begin{proof}
Let $R$ be the incidence matrix \cite{kn:Brouwer12} of $G_1$. Then with the partition $V(G_1)\cup I(G_1) \cup \left[W_1\cup W_2\cup\cdots\cup W_{n_2}\right]$, the adjacency matrix of $G_1 \circleddash G_2$ can be written as
\[A(G_1 \circleddash G_2)=\bmat{
                          0_{n_1\times n_1}   &   R    &  0_{n_1\times m_1n_2}   \\[0.2cm]
                          R^T    &   0_{m_1\times m_1}   &   \mathbf{1}_{n_2}^T\otimes I_{m_1}\\[0.2cm]
                          0_{m_1n_2\times n_1}   &   \mathbf{1}_{n_2}\otimes I_{m_1} &   A(G_2)\otimes I_{m_1}
                       }.\]
Thus the adjacency characteristic polynomial of $G_1 \circleddash G_2$ is given by
\begin{eqnarray*}
\phi\left(A(G_1 \circleddash G_2)\right)
&=& \det \bmat{
                          xI_{n_1} & -R & 0_{n_1\times m_1n_2} \\[0.2cm]
                          -R^T & xI_{m_1} & -\mathbf{1}_{n_2}^T\otimes I_{m_1}\\[0.2cm]
                          0_{m_1n_2\times n_1} & -\mathbf{1}_{n_2}\otimes I_{m_1} & (xI_{n_2}-A(G_2))\otimes I_{m_1}
                       }\\ [0.2cm]
&=& \det((xI_{n_2}-A(G_2))\otimes I_{m_1})\cdot\det(S)\\
&=&\big(\phi\left(A(G_2);x\right)\big)^{m_1}\cdot\det(S),
\end{eqnarray*}
where
\begin{eqnarray*}
S&=&\pmat{
          xI_{n_1} & -R \\[0.2cm]
         -R^T & xI_{m_1}
       }-\pmat{0_{n_1\times m_1n_2}\\[0.2cm]
                -\mathbf{1}_{n_2}^T\otimes I_{m_1}}((xI_{n_2}-A(G_2))\otimes I_{n_1})^{-1}
                \pmat{ 0_{m_1n_2\times n_1}  &  -\mathbf{1}_{n_2}\otimes I_{m_1} }\\ [0.2cm]
&=&\pmat{
           xI_{n_1} & -R \\[0.2cm]
           -R^T &\left(x-\Gamma_{A(G_2)}(x)\right)I_{m_1}
       }
\end{eqnarray*}
is the Schur complement\cite{kn:Schur} of $(xI_{n_2}-A(G_2))\otimes I_{m_1}$.

Note that $R^TR=A(\mathcal{L}(G_1))+2I_{m_1}$ \cite{kn:Cvetkovic10}, and the eigenvalues of $\mathcal{L}(G_1)$ are $\lambda_i(G_1)+r_1-2$, for $i=1, 2, \ldots, n_1 $, and $-2$ repeated $m_1-n_1$ times\cite[Theorem 2.4.1]{kn:Cvetkovic10}, where $\mathcal{L}(G)$ denotes the line graph of G. Thus by applying these arguments, the result follows from
\begin{eqnarray*}
% \nonumber to remove numbering (before each equation)
\det (S) &=&x^{n_1}\cdot\det\left(\left(x-\Gamma_{A(G_2)}(x)\right)I_{m_1}-\frac{1}{x}{R^TR}\right) \\
&=& x^{n_1}\cdot\prod_{i=1}^{m_1}\left(x-\Gamma_{A(G_2)}(x)-\frac{\lambda_i(\mathcal{L}(G_1))+2}{x}\right) \\
&=&x^{n_1}\cdot\Big(x-\Gamma_{A(G_2)}(x)-\frac{-2+2}{x}\Big)^{m_1-n_1}\cdot\prod_{i=1}^{n_1}\left(x-\Gamma_{A(G_2)}(x)-\frac{\lambda_i(G_1)+r_1-2+2}{x}\right)
\\
&=& \big(x-\Gamma_{A(G_2)}(x)\big)^{m_1-n_1}
 \cdot\prod_{i=1}^{n_1}\Big(x^2-\Gamma_{A(G_2)}(x)x-r_1-\lambda_i(G_1)\Big).
\end{eqnarray*}\qed
\end{proof}

Similar to Corollaries \ref{ACorRegular1}, \ref{ACorRegular2} and \ref{ACorRegular3}, Theorem \ref{CoronaTh4} implies the following results.

\begin{cor}\label{ACorRegular9}
Let $G_1$ be an $r_1$-regular graph on $n_1$ vertices and $m_1$ edges, and $G_2$ an $r_2$-regular graph on $n_2$ vertices. Then the $A$-spectrum of $G_1\circleddash G_2$ consists of:
\begin{itemize}
  \item[\rm (a)] $\lambda_i(G_2)$, repeated $m_1$ times, for each $i=2,3,\ldots,n_2$;
  \item[\rm (b)] two roots of the equation
\[x^2-r_2x-n_2=0,\]
each root repeated $ m_1-n_1$ times;
  \item[\rm (c)] three roots of the equation
\[x^3-r_2x^2-(r_1+\lambda_j(G_1)+n_2)x+r_2(r_1+\lambda_j(G_1))=0\]
for each $j=1, 2, \ldots, n_1$.
\end{itemize}
\end{cor}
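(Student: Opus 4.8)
The plan is to specialize Theorem \ref{CoronaTh4} to the case where $G_2$ is $r_2$-regular by substituting the known coronal $\Gamma_{A(G_2)}(x) = n_2/(x-r_2)$ (recorded just after Theorem \ref{CoronaTh1}), and then to simplify the resulting rational expression into the three promised factors. Concretely, Theorem \ref{CoronaTh4} gives
\[
\phi(A(G_1\circleddash G_2);x) = \big(\phi(A(G_2);x)\big)^{m_1}\cdot\big(x-\Gamma_{A(G_2)}(x)\big)^{m_1-n_1}\cdot\prod_{i=1}^{n_1}\Big(x^2-\Gamma_{A(G_2)}(x)x-\lambda_i(G_1)-r_1\Big),
\]
and I would treat its three factors in turn.

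First, since $G_2$ is $r_2$-regular, its largest adjacency eigenvalue is $\lambda_1(G_2)=r_2$, so $\phi(A(G_2);x)=(x-r_2)\prod_{i=2}^{n_2}(x-\lambda_i(G_2))$; raising to the $m_1$-th power yields the eigenvalues in item (a) together with a spare factor $(x-r_2)^{m_1}$. Second, I would clear denominators in $x-\Gamma_{A(G_2)}(x)=(x^2-r_2x-n_2)/(x-r_2)$ to expose the quadratic of item (b), at the cost of a factor $(x-r_2)^{-(m_1-n_1)}$. Third, for each $i$ I would combine $x^2-\frac{n_2}{x-r_2}x-\lambda_i(G_1)-r_1$ over the common denominator $x-r_2$; expanding the numerator produces exactly the cubic $x^3-r_2x^2-(r_1+\lambda_i(G_1)+n_2)x+r_2(r_1+\lambda_i(G_1))$ of item (c), each such factor contributing a further $(x-r_2)^{-1}$.

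The only point requiring care — and the step I expect to be the crux — is verifying that all the spurious $(x-r_2)$ factors cancel, so that the final expression is genuinely a polynomial whose roots are precisely those listed. Collecting exponents, the first factor supplies $(x-r_2)^{m_1}$, the second $(x-r_2)^{-(m_1-n_1)}$, and the third $(x-r_2)^{-n_1}$ (one per value of $i$), for a net exponent $m_1-(m_1-n_1)-n_1=0$. Hence the denominators vanish identically and the characteristic polynomial factors as
\[
\prod_{i=2}^{n_2}(x-\lambda_i(G_2))^{m_1}\cdot(x^2-r_2x-n_2)^{m_1-n_1}\cdot\prod_{i=1}^{n_1}\big(x^3-r_2x^2-(r_1+\lambda_i(G_1)+n_2)x+r_2(r_1+\lambda_i(G_1))\big),
\]
from which items (a), (b), (c) can be read off directly. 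Since no genuine difficulty arises beyond this bookkeeping, I would present the computation compactly rather than belabour it.
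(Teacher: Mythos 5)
Your proposal is correct and follows exactly the route the paper intends: the corollary is obtained from Theorem \ref{CoronaTh4} by substituting $\Gamma_{A(G_2)}(x)=n_2/(x-r_2)$ and clearing denominators, and your bookkeeping of the $(x-r_2)$ exponents ($m_1-(m_1-n_1)-n_1=0$) confirms the cancellation the paper leaves implicit.
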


%\begin{proof}
% (a) Since $G_2$ is $r_2$-regular with $n_2$ vertices, by \cite{kn:McLeman11,kn:Cui12} we have
% $$
% \Gamma_{A(G_2)}(x) = \frac{n_2}{x-r_2},
% $$
%The only pole of $\Gamma_{A(G_2)}(x)$ is $x=r_2$, which is equal to $\l_1(G_2)$. Thus, by Theorem \ref{CoronaTh4}, for each $i=2,3,\ldots,n_2$, $\lambda_i(G_2)$ is an eigenvalue of $G_1\circleddash G_2$ repeated $m_1$ times.

%(b) We can obtain $3n_1$ eigenvalues of $G_1\circleddash G_2$ by solving
%\[x^2-\frac{n_2}{x-r_2}x-r_1-\lambda_j(G_1)=0\]
%for each $j=1,2,\ldots,n_1$.

%(c) The remaining  eigenvalues of $G_1\circleddash G_2$ are obtained by solving
%\[x-\frac{n_2}{x-r_2}=0\]
%for each repeated $m_1-n_1$ times.
%\qed
%\end{proof}

\begin{cor}\label{ACorRegular10}
Let $G$ be an $r$-regular graph on $n$ vertices and $m$ edges with $m \geqslant n$, and let $p,q \geqslant 1$ be integers.  Then the $A$-spectrum of $G \circleddash K_{p,q}$ consists of :
\begin{itemize}
  \item[\rm (a)] $0$, repeated $m(p+q-2)$ times;
  \item[\rm (b)] three roots of the equation
\[x^3-(pq+p+q)x-2pq=0,\]
each root repeated $m-n$ times;
  \item[\rm (c)] four roots of the equation
\[x^4-(pq+p+q+r+\lambda_i(G))x^2-2pqx+pq(r+\lambda_i(G))=0\]
for each $i=1, 2, \ldots, n$.
\end{itemize}
\end{cor}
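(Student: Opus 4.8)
The plan is to specialize Theorem \ref{CoronaTh4} to $G_1=G$ and $G_2=K_{p,q}$ and then substitute the two ingredients already recorded in the paper: the adjacency characteristic polynomial $\phi(A(K_{p,q});x)=(x^2-pq)\,x^{p+q-2}$, coming from the $A$-spectrum $(\pm\sqrt{pq})^1,\,0^{p+q-2}$, together with the coronal $\Gamma_{A(K_{p,q})}(x)=\big((p+q)x+2pq\big)/(x^2-pq)$. Since $K_{p,q}$ has $n_2=p+q$ vertices, Theorem \ref{CoronaTh4} writes $\phi\big(A(G\circleddash K_{p,q});x\big)$ as a product of three factors, and the whole task is to evaluate each factor after plugging in these two formulas, much as in the parallel computation leading to Corollary \ref{ACorRegular2} (the only new feature here being the extra middle factor $(x-\Gamma_{A(G_2)}(x))^{m-n}$ present in Theorem \ref{CoronaTh4} but absent from Theorem \ref{CoronaTh1}).

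First I would treat the leading factor $\big(\phi(A(K_{p,q});x)\big)^{m}=(x^2-pq)^m\,x^{m(p+q-2)}$, which at once yields the eigenvalue $0$ with multiplicity $m(p+q-2)$ claimed in part (a), while leaving a surplus power $(x^2-pq)^m$ to be tracked. Next I would clear denominators in the middle factor, using
\[
x-\Gamma_{A(K_{p,q})}(x)=\frac{x^3-(pq+p+q)x-2pq}{x^2-pq},
\]
so that it contributes the cubic $x^3-(pq+p+q)x-2pq$ to the power $m-n$ in the numerator, which is exactly part (b), against a denominator $(x^2-pq)^{m-n}$. Finally, clearing the same denominator in each term of $\prod_{i=1}^{n}\big(x^2-\Gamma_{A(K_{p,q})}(x)\,x-\lambda_i(G)-r\big)$ turns the $i$th factor into
\[
\frac{x^4-(pq+p+q+r+\lambda_i(G))x^2-2pqx+pq(r+\lambda_i(G))}{x^2-pq},
\]
whose numerator is precisely the quartic of part (c).

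The step I expect to be the crux is the bookkeeping of the spurious factor $x^2-pq$: it enters with exponent $+m$ from the first factor, $-(m-n)$ from the denominator of the middle factor, and $-1$ from each of the $n$ terms of the product, for a net exponent $m-(m-n)-n=0$. Verifying this exact cancellation is what guarantees that $\pm\sqrt{pq}$ are \emph{not} eigenvalues of $G\circleddash K_{p,q}$ and that the resulting expression is genuinely a polynomial whose roots are exactly those listed in (a)--(c). (One should note in passing that the hypothesis $m\ge n$ keeps the exponent $m-n$ nonnegative, so no factor is inverted.) Everything else is routine polynomial algebra, so I would display the cancellation count explicitly and then read off the three groups of roots together with their stated multiplicities.
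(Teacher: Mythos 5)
Your proposal is correct and matches the paper's intended derivation: the paper states this corollary as an immediate specialization of Theorem \ref{CoronaTh4} to $G_2=K_{p,q}$ (analogous to Corollary \ref{ACorRegular2}), and your substitution of $\phi(A(K_{p,q});x)=x^{p+q-2}(x^2-pq)$ and $\Gamma_{A(K_{p,q})}(x)=((p+q)x+2pq)/(x^2-pq)$, together with the exponent count $m-(m-n)-n=0$ for the factor $x^2-pq$, is exactly the required bookkeeping. All three resulting factors and their multiplicities check out, so nothing is missing.
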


\begin{cor}\label{ACorRegular11}
\begin{itemize}
  \item[\rm (a)] If $G_1$ and $G_2$ are $A$-cospectral regular graphs, and $H$ is an arbitrary graph, then $G_1\circleddash H$ and $G_2\circleddash H$ are $A$-cospectral.
  \item[\rm (b)] If G is a regular graph, and $H_1$ and $H_2$ are $A$-cospectral graphs with $\Gamma_{A(H_1)}(x)=\Gamma_{A(H_2)}(x)$, then $G\circleddash H_1$ and $G\circleddash H_2$ are $A$-cospectral.
\end{itemize}
\end{cor}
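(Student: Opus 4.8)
The plan is to read everything off the explicit factorization of $\phi(A(G_1\circleddash G_2);x)$ furnished by Theorem \ref{CoronaTh4}. The crucial observation is that the right-hand side of that formula depends on the first factor only through the four data $n_1$, $m_1$, $r_1$ and the eigenvalue multiset $\{\lambda_1(G_1),\ldots,\lambda_{n_1}(G_1)\}$, and on the second factor only through its adjacency characteristic polynomial $\phi(A(\cdot);x)$ and its $A$-coronal $\Gamma_{A(\cdot)}(x)$. Hence to prove that two subdivision-edge coronae are $A$-cospectral it suffices to check that each of these data agrees for the two coronae, after which the two characteristic polynomials coincide factor by factor.

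For part (a), I would first record that cospectrality of the two regular graphs $G_1$ and $G_2$ already forces agreement of all the first-factor data: the common spectrum has the same length, so $n_1$ coincides; the identity $2m_1=\sum_i\lambda_i(G_1)^2=\mathrm{tr}\,A(G_1)^2$ shows $m_1$ coincides; and for a regular graph the degree equals the largest eigenvalue, so the common top eigenvalue gives a common $r_1$. The eigenvalue multiset is shared by definition. Substituting the common second factor $H$ into Theorem \ref{CoronaTh4}, both $\phi(A(H);x)$ and $\Gamma_{A(H)}(x)$ are identical for the two coronae, and with $n_1,m_1,r_1$ and the $\lambda_i(G_1)$ matching, the three factors $(\phi(A(H);x))^{m_1}$, $(x-\Gamma_{A(H)}(x))^{m_1-n_1}$ and $\prod_i(x^2-\Gamma_{A(H)}(x)x-\lambda_i(G_1)-r_1)$ coincide, giving $\phi(A(G_1\circleddash H))=\phi(A(G_2\circleddash H))$.

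For part (b), the regular graph $G$ is fixed, so all first-factor data $n_1,m_1,r_1,\{\lambda_i(G)\}$ are literally the same in the two formulas. The hypothesis that $H_1$ and $H_2$ are $A$-cospectral gives $\phi(A(H_1);x)=\phi(A(H_2);x)$, and the added hypothesis $\Gamma_{A(H_1)}(x)=\Gamma_{A(H_2)}(x)$ makes the coronal factors agree as well; again all three factors of Theorem \ref{CoronaTh4} match, so $G\circleddash H_1$ and $G\circleddash H_2$ are $A$-cospectral.

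The argument is essentially bookkeeping, so the only genuine content — the main obstacle, such as it is — lies in justifying the invariance claims used in part (a), namely that cospectral regular graphs must share $n_1$, $m_1$ and $r_1$; these are standard spectral facts but must be invoked explicitly. It is also worth flagging why part (b) carries the extra coronal hypothesis: cospectrality of $H_1$ and $H_2$ controls the factor $(\phi(A(H_i);x))^{m_1}$ but says nothing about $\Gamma_{A(H_i)}(x)$, which enters the other two factors independently; without assuming the coronals equal, the middle and product factors need not match, so the hypothesis cannot be dropped.
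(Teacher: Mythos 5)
Your proposal is correct and follows exactly the route the paper intends: the corollary is read off directly from the factorization in Theorem \ref{CoronaTh4}, which depends on the first graph only through $n_1$, $m_1$, $r_1$ and its adjacency eigenvalues, and on the second only through $\phi(A(\cdot);x)$ and $\Gamma_{A(\cdot)}(x)$. Your explicit justification that $A$-cospectral regular graphs share $n_1$, $m_1$ and $r_1$, and your remark on why the coronal hypothesis in part (b) cannot be dropped, are correct and in fact more detailed than the paper, which states the corollary without proof.
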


Similar to Corollary \ref{ACorRegular17}, the subdivision-edge coronae enable us to construct infinite families of $A$-integral graphs by using Corollary \ref{ACorRegular9}. Note that the $A$-spectrum of $K_{n_1}\circleddash \overline{K_{n_2}}$ is $\left(\pm\sqrt{2n_1-2+n_2}\right)^1$, $\left(\pm\sqrt{n_1-2+n_2}\right)^{n_1-1}$, $\left(\pm\sqrt{n_2}\right)^{m_1-n_1}$, $0^{m_1(n_2-1)+n_1}$. Then $K_{n_1}\circleddash \overline{K_{n_2}}$ is $A$-integral if and only if $\sqrt{2n_1-2+n_2}$, $\sqrt{n_1-2+n_2}$ and $\sqrt{n_2}$ are integers.

Corollary \ref{ACorRegular9} implies that the $A$-spectrum of $K_{n_1,n_1}\circleddash \overline{K_{n_2}}$ consists of $\left(\pm\sqrt{2n_1+n_2}\right)^1$, $\left(\pm\sqrt{n_1+n_2}\right)^{2n_1-2}$,  $\left(\pm\sqrt{n_2}\right)^{m_1-2n_1+1}$,  $0^{2n_1+m_1n_2-m_1}$. Thus $K_{n_1,n_1}\circleddash \overline{K_{n_2}}$ is $A$-integral if and only if $\sqrt{2n_1+n_2}$, $\sqrt{n_1+n_2}$ and $\sqrt{n_2}$ are integers. Then we have the following two constructions of $A$-integral graphs (see Figure \ref{fg33} for an example of Corollary \ref{ACorRegular18fd}).

\begin{figure}
\centering
\vspace{-0.7cm}
\includegraphics*[height=6.6cm]{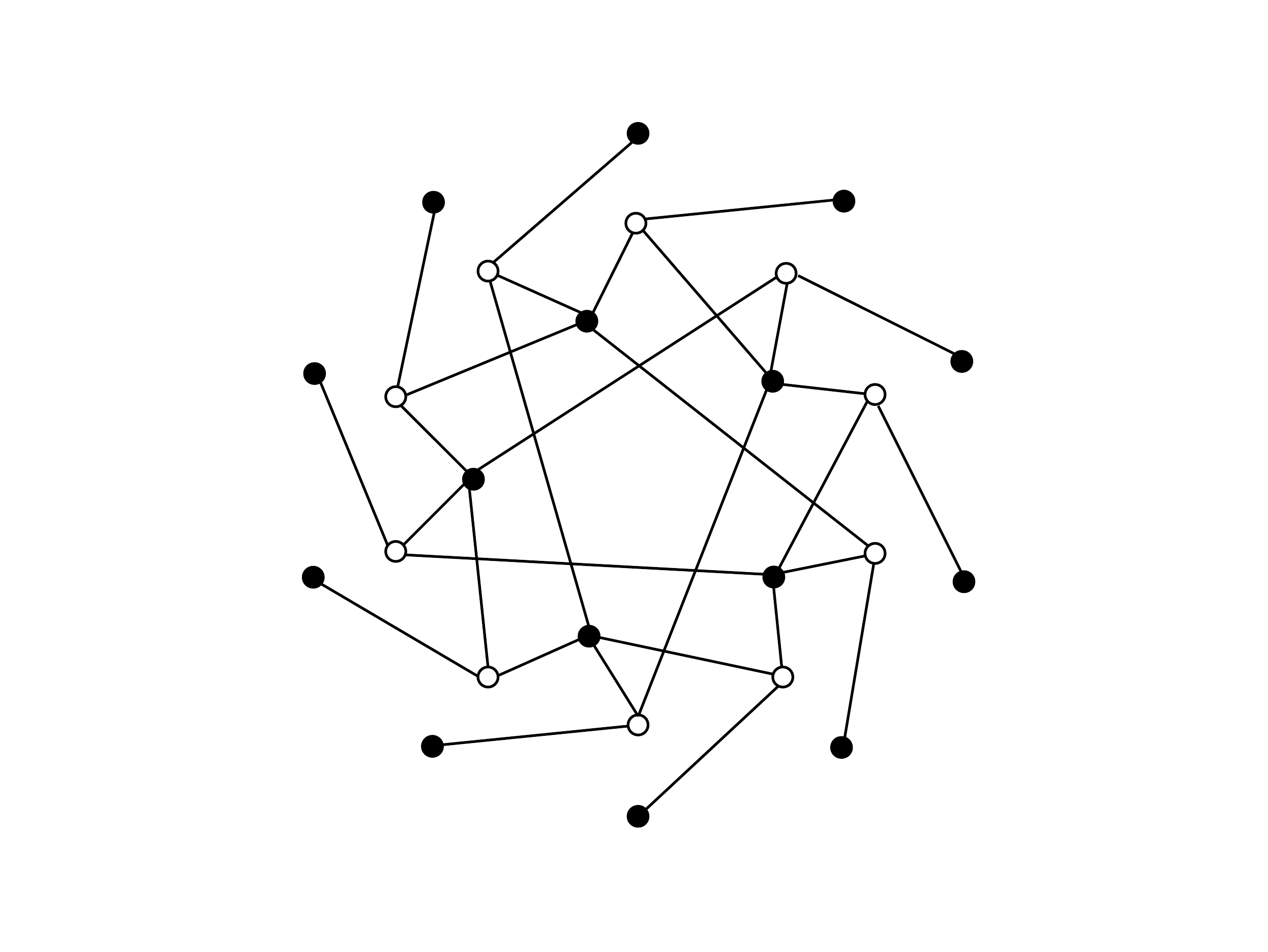}
\vspace{-0.7cm}
\caption{\small $K_5\circleddash \overline{K_1}$ with $A$-spectrum $(\pm3)^1$, $(\pm2)^4$, $(\pm1)^5$, $0^{5}$.}
\label{fg33}
\end{figure}

\begin{cor}\label{ACorRegular18fd}
$K_{n_1}\circleddash \overline{K_{n_2}}$ is $A$-integral if $n_1=2t+3$ and $n_2=t^2$ for $t=1,2,\ldots$.
\end{cor}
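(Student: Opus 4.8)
The plan is to follow the template already set by Corollary~\ref{ACorRegular17}, reducing the integrality question to a simultaneous integrality of three radicals and then producing an explicit infinite parametric family. From the stated $A$-spectrum of $K_{n_1}\circleddash \overline{K_{n_2}}$, namely $\left(\pm\sqrt{2n_1-2+n_2}\right)^1$, $\left(\pm\sqrt{n_1-2+n_2}\right)^{n_1-1}$, $\left(\pm\sqrt{n_2}\right)^{m_1-n_1}$, $0^{m_1(n_2-1)+n_1}$, the graph is $A$-integral if and only if $\sqrt{2n_1-2+n_2}$, $\sqrt{n_1-2+n_2}$ and $\sqrt{n_2}$ are all integers. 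Since the corollary only claims sufficiency (the statement reads ``is $A$-integral if''), I would merely verify that the proposed values $n_1=2t+3$, $n_2=t^2$ force all three radicands to be perfect squares, rather than characterising all solutions.

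First I would substitute $n_2=t^2$ directly, so $\sqrt{n_2}=t$ is an integer for every $t$. Next I would compute $n_1-2+n_2=(2t+3)-2+t^2=t^2+2t+1=(t+1)^2$, giving $\sqrt{n_1-2+n_2}=t+1$, an integer. Finally $2n_1-2+n_2=2(2t+3)-2+t^2=t^2+4t+4=(t+2)^2$, so $\sqrt{2n_1-2+n_2}=t+2$, again an integer. Thus all three radicals are integers simultaneously for every $t=1,2,\ldots$, which establishes the claimed sufficient condition. The arithmetic identities $(t+1)^2$ and $(t+2)^2$ are the whole content, so the verification is a one-line substitution once the three-square reduction is in place.

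The only modelling subtlety I would flag is the implicit constraint $m_1\ge n_1$ needed so that the multiplicity $m_1-n_1$ of $\pm\sqrt{n_2}$ is nonnegative and the spectrum description is legitimate; for $K_{n_1}$ we have $m_1=\binom{n_1}{2}$, which satisfies $m_1\ge n_1$ whenever $n_1\ge 3$, and indeed $n_1=2t+3\ge 5$ for $t\ge 1$, so this is automatic and need not appear in the statement. I would note in passing that, unlike Corollary~\ref{ACorRegular17}, this family is only a sufficient condition: the substitution is the natural ``consecutive-square'' parametrisation $(t,t+1,t+2)$, and I would not attempt to prove it exhausts all integral cases.

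There is essentially no obstacle here; the main (minor) point is recognising the right parametrisation that makes all three expressions consecutive squares. Given the target values, this is immediate, so I expect the proof to be a direct computational check with no genuine difficulty—the hard work was already done in choosing $n_1=2t+3$, $n_2=t^2$ so that the three radicands telescope into $t^2$, $(t+1)^2$, $(t+2)^2$.
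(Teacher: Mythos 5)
Your proposal is correct and follows exactly the route the paper intends: the reduction to the simultaneous integrality of $\sqrt{2n_1-2+n_2}$, $\sqrt{n_1-2+n_2}$ and $\sqrt{n_2}$ is stated in the paper immediately before the corollary, and the remaining content is precisely your one-line substitution showing the three radicands become the consecutive squares $t^2$, $(t+1)^2$, $(t+2)^2$. The paper omits this verification, so your write-up (including the harmless observation that $m_1\geq n_1$ holds automatically) supplies it correctly with no deviation in approach.
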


\begin{cor}\label{ACorRegular91fd}
$K_{n_1,n_1}\circleddash \overline{K_{n_2}}$ is $A$-integral if $n_1=4st^2(2s^2+3s+1)$ and $n_2=t^2(2s^2-1)^2$ for $s=1,2,3,\ldots$, $t=1,2,3,\ldots$,
\end{cor}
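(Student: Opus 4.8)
The goal is to prove Corollary~\ref{ACorRegular91fd}: that $K_{n_1,n_1}\circleddash \overline{K_{n_2}}$ is $A$-integral whenever $n_1=4st^2(2s^2+3s+1)$ and $n_2=t^2(2s^2-1)^2$ for positive integers $s,t$. The plan is to reduce the integrality question to a Diophantine condition and then exhibit the stated parametric family as a solution. By the discussion preceding the statement (a consequence of Corollary~\ref{ACorRegular9} applied to $G_1=K_{n_1,n_1}$, which is $n_1$-regular with $A$-spectrum $(\pm n_1)^1,0^{2n_1-2}$, and $G_2=\overline{K_{n_2}}$, which is $0$-regular), the $A$-spectrum of $K_{n_1,n_1}\circleddash\overline{K_{n_2}}$ consists of $\big(\pm\sqrt{2n_1+n_2}\big)^1$, $\big(\pm\sqrt{n_1+n_2}\big)^{2n_1-2}$, $\big(\pm\sqrt{n_2}\big)^{m_1-2n_1+1}$, and $0^{2n_1+m_1n_2-m_1}$. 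Hence $K_{n_1,n_1}\circleddash\overline{K_{n_2}}$ is $A$-integral if and only if $\sqrt{2n_1+n_2}$, $\sqrt{n_1+n_2}$ and $\sqrt{n_2}$ are all integers. This is exactly the same integrality condition that arises for $K_{n_1,n_1}\odot\overline{K_{n_2}}$ in Corollary~\ref{ACorRegular18d}, so the entire number-theoretic argument carries over verbatim.

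Concretely, I would set $\sqrt{2n_1+n_2}=a$, $\sqrt{n_1+n_2}=b$ and $\sqrt{n_2}=c$ with $a,b,c$ nonnegative integers. Solving the first two gives $n_1=a^2-b^2$ and $n_2=2b^2-a^2$, and the third requires $2b^2-a^2=c^2$. The next step is to analyze the quadratic form equation $2b^2-a^2=c^2$, equivalently $(a+b)(a-b)=(b+c)(b-c)$. Introducing $u=a+b$, $v=a-b$, $x=b+c$, $y=b-c$ turns this into the coupled system $uv=xy$ together with the linear relation $u-v=x+y$ (both following from the definitions). Eliminating $u$ from these two equations yields $2v^2=(x-v)(y-v)$, a single constraint that I can parametrize by setting $x-v=2vs$ and $s(y-v)=v$ for a positive integer parameter $s$.

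From this parametrization one recovers $x=(2s+1)v=b+c$ and $y=(1+\tfrac{1}{s})v=b-c$, whence $a=b+v=vs+2v+\tfrac{v}{2s}$, $b=vs+v+\tfrac{v}{2s}$ and $c=vs-\tfrac{v}{2s}$. To clear the denominator $2s$ and guarantee integrality, I would set $v=2st$ for a positive integer $t$; substituting gives the clean closed forms $a=2s^2t+4st+t$, $b=2s^2t+2st+t$ and $c=2s^2t-t$. Finally, computing $n_1=a^2-b^2=(a+b)(a-b)$ and $n_2=c^2$ from these expressions produces $n_1=4st^2(2s^2+3s+1)$ and $n_2=t^2(2s^2-1)^2$, which is precisely the claimed family. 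Since $a,b,c$ are integers by construction, all three square roots are integers, so the graph is $A$-integral.

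The main obstacle is not any single hard estimate but rather the bookkeeping in the Diophantine reduction: one must verify that the substitutions $x-v=2vs$, $s(y-v)=v$, $v=2st$ are consistent with $uv=xy$ and $u-v=x+y$ and that they indeed produce integer $a,b,c$ rather than merely rational ones. I expect the only delicate point to be checking the back-substitution that $2v^2=(x-v)(y-v)$ is genuinely satisfied by the chosen $x,y$, and confirming the final algebraic identities $n_1=4st^2(2s^2+3s+1)$ and $n_2=t^2(2s^2-1)^2$; these are routine polynomial expansions. Note that the statement only claims sufficiency (``is $A$-integral if''), not a characterization, so there is no need to prove that every integral member of the family arises this way — this mirrors Corollary~\ref{ACorRegular18d} and makes the parametrization-plus-verification strategy complete.
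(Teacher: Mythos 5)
Your proposal is correct and follows the paper's own route: the paper derives the same integrality condition ($\sqrt{2n_1+n_2}$, $\sqrt{n_1+n_2}$, $\sqrt{n_2}$ all integers) from Corollary \ref{ACorRegular9} and then reuses verbatim the Diophantine parametrization given in the proof of Corollary \ref{ACorRegular18d}, which is exactly the argument you reproduce. The algebra ($2v^2=(x-v)(y-v)$, the substitutions $x-v=2vs$, $s(y-v)=v$, $v=2st$, and the final identities for $n_1$ and $n_2$) all check out.
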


\subsection{$L$-spectra of subdivision-edge coronae}
\label{sec:SECoronaL}

\begin{thm}\label{CoronaTh5}
Let $G_1$ be an $r_1$-regular graph on $n_1$ vertices and $m_1$ edges, and $G_2$ an arbitrary graph on $n_2$ vertices. Then
\begin{eqnarray*}
\begin{aligned}
\phi\left(L(G_1\circleddash G_2);x\right)
&= \big(x^2-(3+n_2)x+2\big)^{m_1-n_1}\cdot\prod_{i=2}^{n_2}\big(x-1-\mu_i(G_2)\big)^{m_1}\\
 &\quad\cdot\prod_{i=1}^{n_1}\Big(x^3-\big(3+r_1+n_2)x^2+(2+r_1+r_1n_2+\mu_i(G_1))x-\mu_i(G_1)\Big) .
\end{aligned}
 \end{eqnarray*}
\end{thm}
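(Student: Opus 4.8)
The plan is to mirror the proofs of Theorems~\ref{CoronaTh2} and~\ref{CoronaTh4}, since $G_1\circleddash G_2$ combines the block layout of the subdivision-edge adjacency matrix with the diagonal degree shifts proper to the Laplacian. First I would use the degrees recorded just before the theorem ($r_1$ on $V(G_1)$, $2+n_2$ on $I(G_1)$, and $d_{G_2}(u_j)+1$ on the copies) to write, with respect to the partition $V(G_1)\cup I(G_1)\cup[W_1\cup\cdots\cup W_{n_2}]$,
\[
L(G_1\circleddash G_2)=\bmat{r_1I_{n_1} & -R & 0 \\ -R^T & (2+n_2)I_{m_1} & -\mathbf{1}_{n_2}^T\otimes I_{m_1} \\ 0 & -\mathbf{1}_{n_2}\otimes I_{m_1} & (L(G_2)+I_{n_2})\otimes I_{m_1}},
\]
where $R$ is the incidence matrix of $G_1$; the $(3,3)$ block is $(L(G_2)+I_{n_2})\otimes I_{m_1}$ because every copy vertex gains one unit of degree from its edge to the inserted vertex.

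Next I would form $xI-L$ and peel off the $(3,3)$ block by a Schur complement, as in Theorem~\ref{CoronaTh2}. Using $\det\big(((x-1)I_{n_2}-L(G_2))\otimes I_{m_1}\big)=\prod_{i=1}^{n_2}(x-1-\mu_i(G_2))^{m_1}$ and the Kronecker identity $(\mathbf{1}_{n_2}^T\otimes I_{m_1})\big[((x-1)I_{n_2}-L(G_2))\otimes I_{m_1}\big]^{-1}(\mathbf{1}_{n_2}\otimes I_{m_1})=\Gamma_{L(G_2)}(x-1)I_{m_1}$, the problem collapses to the $(n_1+m_1)$-dimensional Schur complement
\[
S=\pmat{(x-r_1)I_{n_1} & R \\ R^T & \theta I_{m_1}},\qquad \theta:=x-2-n_2-\Gamma_{L(G_2)}(x-1).
\]
Applying~(\ref{eq:GammaTL}) gives the closed form $\theta=(x^2-(3+n_2)x+2)/(x-1)$, which already anticipates the quadratic factor in the statement.

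The key computation is $\det(S)$, and here I would follow Theorem~\ref{CoronaTh4} rather than Theorem~\ref{CoronaTh2}: pivoting on the top-left block yields $\det(S)=(x-r_1)^{n_1}\det\big(\theta I_{m_1}-\tfrac{1}{x-r_1}R^TR\big)$. The structural input is $R^TR=A(\mathcal{L}(G_1))+2I_{m_1}$, whose eigenvalues are $\lambda_i(G_1)+r_1$ for $i=1,\dots,n_1$ together with $0$ of multiplicity $m_1-n_1$. The zero eigenvalues produce a factor $\theta^{m_1-n_1}$, which (after clearing the $(x-1)$ denominator in $\theta$) supplies the $(x^2-(3+n_2)x+2)^{m_1-n_1}$ term, while each remaining eigenvalue contributes $(x-r_1)\theta-(\lambda_i(G_1)+r_1)$. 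Substituting $\lambda_i(G_1)=r_1-\mu_i(G_1)$ and the closed form of $\theta$, a short expansion shows this equals $(x-1)^{-1}\big(x^3-(3+r_1+n_2)x^2+(2+r_1+r_1n_2+\mu_i(G_1))x-\mu_i(G_1)\big)$, exactly the cubic in the theorem.

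Assembling the pieces, the only remaining task is bookkeeping of the powers of $(x-1)$, which I expect to be the main (if routine) obstacle. The factor $\prod_{i=1}^{n_2}(x-1-\mu_i(G_2))^{m_1}$ carries a hidden $(x-1)^{m_1}$ through $\mu_1(G_2)=0$; the term $\theta^{m_1-n_1}$ carries $(x-1)^{-(m_1-n_1)}$; and the $n_1$ cubic factors carry $(x-1)^{-n_1}$. Since $m_1-(m_1-n_1)-n_1=0$, all of these cancel, leaving no spurious $(x-1)$ and yielding precisely
\[
\phi(L(G_1\circleddash G_2);x)=(x^2-(3+n_2)x+2)^{m_1-n_1}\prod_{i=2}^{n_2}(x-1-\mu_i(G_2))^{m_1}\prod_{i=1}^{n_1}\big(x^3-(3+r_1+n_2)x^2+(2+r_1+r_1n_2+\mu_i(G_1))x-\mu_i(G_1)\big),
\]
which completes the proof.
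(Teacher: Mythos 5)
Your proposal is correct and follows essentially the same route as the paper: the same block form of $L(G_1\circleddash G_2)$, the same Schur complement eliminating the $(L(G_2)+I_{n_2})\otimes I_{m_1}$ block, the same reduction of $\det(S)$ via the eigenvalues of $R^TR=A(\mathcal{L}(G_1))+2I_{m_1}$, and the same cancellation of the powers of $(x-1)$ coming from $\mu_1(G_2)=0$, $\theta^{m_1-n_1}$, and the $n_1$ cubic factors. The algebra you outline (the closed form of $\theta$ and the expansion of $(x-r_1)\theta-(\lambda_i(G_1)+r_1)$) checks out and matches the paper's computation.
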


\begin{proof}
Let $R$ be the incidence matrix \cite{kn:Brouwer12} of $G_1$. Then the Laplacian matrix of $G_1 \circleddash G_2$ can be written as
\[L(G_1 \circleddash G_2)=\bmat{
                          r_1I_{n_1}   &   -R    &  0_{n_1\times m_1n_2}   \\[0.2cm]
                          -R^T    &   (2+n_2)I_{m_1}   &   -\mathbf{1}_{n_2}^T\otimes I_{m_1}\\[0.2cm]
                          0_{m_1n_2\times n_1}   &   -\mathbf{1}_{n_2}\otimes I_{m_1}  &  ( L(G_2)+I_{n_2})\otimes I_{m_1}
                       }.\]
Thus the Laplacian characteristic polynomial of $G_1 \circleddash G_2$ is given by
\begin{eqnarray*}
\phi\left(L(G_1 \circleddash G_2)\right)
&=& \det\bmat{
                          (x-r_1)I_{n_1} & R & 0_{n_1\times m_1n_2} \\[0.2cm]
                          R^T & (x-2-n_2)I_{m_1} & \mathbf{1}_{n_2}^T\otimes I_{m_1}\\[0.2cm]
                          0_{m_1n_2\times n_1} & \mathbf{1}_{n_2}\otimes I_{m_1} & ((x-1)I_{n_2}-L(G_2))\otimes I_{m_1}
                       }\\ [0.2cm]
&=& \det(((x-1)I_{n_2}-L(G_2))\otimes I_{m_1})\cdot\det(S)\\
&=& \det(S)\cdot\prod_{i=1}^{n_2}\big(x-1-\mu_i(G_2)\big)^{m_1},
\end{eqnarray*}
where
\begin{eqnarray*}
S&=&\pmat{
           (x-r_1)I_{n_1} & R \\[0.2cm]
           R^T &\left(x-2-n_2-\Gamma_{L(G_2)}(x-1)\right)I_{m_1}
       }
\end{eqnarray*}
is the Schur complement\cite{kn:Schur} of $((x-1)I_{n_2}-L(G_2))\otimes I_{m_1}$. Recall that $\mu_i(G_1)=r_1-\lambda_i(G_1)$, $i=1,2,\ldots,n_1$. Then the result follows from

\begin{eqnarray*}
\begin{aligned}
% \nonumber to remove numbering (before each equation)
\det (S) &=(x-r_1)^{n_1}\cdot\det\left(\left(x-2-n_2-\Gamma_{L(G_2)}(x-1)\right)I_{m_1}-\frac{1}{x-r_1}{R^TR}\right) \\
&= (x-r_1)^{n_1}\cdot\prod_{i=1}^{m_1}\left(x-2-n_2-\Gamma_{L(G_2)}(x-1)-\frac{\lambda_i(\mathcal{L}(G_1))+2}{x-r_1}\right) \\
&=(x-r_1)^{n_1}\cdot\left(x-2-n_2-\Gamma_{L(G_2)}(x-1)-\frac{-2+2}{x-r_1}\right)^{m_1-n_1}\\
    &\quad\cdot\prod_{i=1}^{n_1}\left(x-2-n_2-\Gamma_{L(G_2)}(x-1)-\frac{\lambda_i(G_1)+r_1-2+2}{x-r_1}\right) \\
&=\big(x-2-n_2-\Gamma_{L(G_2)}(x-1)\big)^{m_1-n_1} \\
  &\quad \cdot\prod_{i=1}^{n_1}\Big(x^2-\big(2+r_1+n_2+\Gamma_{L(G_2)}(x-1)\big)x+r_1n_2+r_1\Gamma_{L(G_2)}(x-1)+\mu_i(G_1)\Big).\\
&=(x-1)^{-m_1}\cdot\big(x^2-(3+n_2)x+2\big)^{m_1-n_1} \\
  &\quad \cdot\prod_{i=1}^{n_1}\Big(x^3-\big(3+r_1+n_2)x^2+(2+r_1+r_1n_2+\mu_i(G_1))x-\mu_i(G_1)\Big).
\end{aligned}
\end{eqnarray*}\qed
\end{proof}

%By (\ref{eq:GammaTL}), Theorem \ref{CoronaTh5} implies the following results.
%\begin{cor}\label{ACorRegular12}
%Let $G_1$ be an $r_1$-regular graph on $n_1$ vertices and $m_1$ edges, and $G_2$ an $r_2$-regular graph on $n_2$ vertices. Then the $L$-spectrum of $G_1\circleddash G_2$ consists of:
%\begin{itemize}
% \item[\rm (a)] $\mu_i(G_2)+1$, repeated $m_1$ times, for each $i=2,3,\ldots,n_2$;
%  \item[\rm (b)] three roots of the equation
%\[x^3-(3+r_1+n_2)x^2+\big(2+r_1+r_1n_2+\mu_j(G_1)\big)x-\mu_j(G_1)=0\]
%for each $j=1, 2, \ldots, n_1$;
%  \item[\rm (c)]  two roots of the equation
%\[x^2-(3+n_2)x+2=0,\]
%each root repeated $m_1-n_1$ times.
%\end{itemize}
%\end{cor}

%\begin{proof}
%(a) Since $G_2$ is $r_2$-regular on $n_2$ vertices, by (\ref{eq:GammaTL}) we have
%$$
%\Gamma_{L(G_2)}(x-1) = \frac{n_2}{x-1}.
%$$
%The only pole of $\Gamma_{L(G_2)}(x-1)$ is $x-1=0$, which is equal to $\mu_1(G_2)$. Thus, by Theorem \ref{CoronaTh5}, for each $i=2,3,\ldots,n_2$, $\mu_i(G_2)+1$ is an eigenvalue of $G_1\circleddash G_2$ repeated $m_1$ times.

%(b) We can obtain $3n_1$ eigenvalues of $G_1\circleddash G_2$ by solving
%\[x^2-(2+r_1+n_2+\frac{n_2}{x-1})x+r_1n_2+r_1\frac{n_2}{x-1}+\mu_i(G_1)=0\]
%for each $i=1,2,\ldots,n_1$.

%(c) The remaining  eigenvalues of $G_1\circleddash G_2$ are obtained by solving
%\[x-2-n_2-\frac{n_2}{x-1}=0\]
%for each repeated $m_1-n_1$ times.
%\qed
%\end{proof}

Theorem \ref{CoronaTh5} implies the following results.

\begin{cor}\label{ACorRegular13}
Let $G_1$ be an $r_1$-regular graph on $n_1$ vertices and $m_1$ edges, and $G_2$ an  arbitrary graph on $n_2$ vertices. Then
%$$
\begin{eqnarray*}
\begin{aligned}
t(G_1\circleddash G_2) &= \frac{2^{m_1-n_1}\cdot(2+r_1+r_1n_2)n_1\cdot t(G_1)\cdot\prod_{i=2}^{n_2}\big(\mu_i(G_2)+1\big)^{m_1}}{n_1+m_1+m_1n_2}.\\
\end{aligned}
\end{eqnarray*}

\begin{cor}\label{ACorRegular133}
Let $G_1$ be an $r_1$-regular graph on $n_1$ vertices and $m_1$ edges, and $G_2$ an arbitrary graph on $n_2$ vertices. Then
\begin{eqnarray*}
\begin{aligned}
Kf(G_1\circleddash G_2) &= \Big(m_1(1+n_2)+n_1\Big)\\
&\quad\times\left(\frac{(3+n_2)m_1-(n_2+1)n_1-2}{2}+\frac{3+r_1+n_2}{2+r_1+r_1n_2}+\frac{2+r_1+r_1n_2}{n_1}\cdot Kf(G_1)\right.\\
&\quad\quad\quad\quad\quad\left.+\sum_{i=2}^{n_2}\frac{m_1}{1+\mu_i(G_2)}\right).
\end{aligned}
\end{eqnarray*}
\end{cor}

\end{cor}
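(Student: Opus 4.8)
The plan is to invoke the spectral formula $Kf(G)=N\sum 1/\mu$, where $N$ is the number of vertices and the sum ranges over the non-zero Laplacian eigenvalues, and to read the full Laplacian spectrum of $G_1\circleddash G_2$ off the factorisation in Theorem \ref{CoronaTh5}. The prefactor is immediate: $N=m_1(1+n_2)+n_1$ by the vertex count recorded in the introduction, so all the content lies in evaluating $\sum 1/\mu$ across the three families of eigenvalues produced by that factorisation.

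First I would locate the unique zero eigenvalue. In the cubic factor the constant term is $-\mu_i(G_1)$, so the zero arises exactly at the index $i=1$, where $\mu_1(G_1)=0$; there the cubic splits as $x\bigl(x^2-(3+r_1+n_2)x+(2+r_1+r_1n_2)\bigr)$. None of the other factors vanishes at $x=0$ (the first quadratic has constant term $2$, and the values $1+\mu_i(G_2)$ are at least $1$), so after deleting this single root the remaining two roots of the split quadratic contribute, by Vieta, the reciprocal sum $(3+r_1+n_2)/(2+r_1+r_1n_2)$, matching the second term of the claim.

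Next I would sum the reciprocals over the two bulk families and the generic cubics. The quadratic $x^2-(3+n_2)x+2$ has reciprocal-of-roots sum $(3+n_2)/2$ and multiplicity $m_1-n_1$, while the eigenvalues $1+\mu_i(G_2)$ for $i=2,\ldots,n_2$ carry multiplicity $m_1$ and give $\sum_{i=2}^{n_2}m_1/(1+\mu_i(G_2))$ verbatim. For each $i=2,\ldots,n_1$ the cubic $x^3+px^2+qx+s$ has all three roots non-zero, and the Vieta identity that the sum of reciprocals of its roots equals $q/(-s)$ evaluates here to $(2+r_1+r_1n_2+\mu_i(G_1))/\mu_i(G_1)=1+(2+r_1+r_1n_2)/\mu_i(G_1)$. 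Summing over these indices and substituting $\sum_{i=2}^{n_1}1/\mu_i(G_1)=Kf(G_1)/n_1$ produces $(n_1-1)+(2+r_1+r_1n_2)Kf(G_1)/n_1$, which supplies the $Kf(G_1)$ term plus a leftover constant $n_1-1$.

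The last step is bookkeeping of the constant contributions. Collecting $(m_1-n_1)(3+n_2)/2$ from the bulk quadratic and $n_1-1$ from the generic cubics, a one-line expansion gives $\bigl((3+n_2)m_1-(n_2+1)n_1-2\bigr)/2$, the first term of the formula; multiplying the assembled sum by $N$ completes the proof. The only real pitfall, and hence the main obstacle, is the accounting of the zero eigenvalue: one must be careful that it is removed solely from the $i=1$ cubic and not mistakenly from a bulk quadratic, since an error there would corrupt both the multiplicity tally and the residual constants.
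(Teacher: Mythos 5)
Your Kirchhoff-index argument is correct and is exactly the derivation the paper intends (the paper offers no explicit proof beyond ``Theorem \ref{CoronaTh5} implies the following results''): you correctly isolate the single zero eigenvalue in the $i=1$ cubic, and the Vieta bookkeeping — $\frac{3+n_2}{2}$ per bulk quadratic, $\frac{3+r_1+n_2}{2+r_1+r_1n_2}$ from the split quadratic, and $1+\frac{2+r_1+r_1n_2}{\mu_i(G_1)}$ per generic cubic — assembles to the stated constant $\frac{(3+n_2)m_1-(n_2+1)n_1-2}{2}$ and the $Kf(G_1)$ term. The one omission is that the statement as printed also contains Corollary \ref{ACorRegular13}, the spanning-tree count $t(G_1\circleddash G_2)$, which your proposal never addresses; it follows from the same spectrum by the companion Vieta identity for \emph{products} of roots (constant terms $2$, $2+r_1+r_1n_2$, and $\mu_i(G_1)$ respectively, with $\prod_{i=2}^{n_1}\mu_i(G_1)=n_1\,t(G_1)$), so you should append that half-page of bookkeeping to cover the full statement.
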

\begin{cor}\label{ACorRegular14}
\begin{itemize}
  \item[\rm (a)] If $G_1$ and $G_2$ are $L$-cospectral regular graphs, and $H$ is an arbitrary graph, then $G_1\circleddash H$ and $G_2\circleddash H$ are $L$-cospectral.
  \item[\rm (b)] If G is a regular graph, and $H_1$ and $H_2$ are $L$-cospectral graphs, then $G\circleddash H_1$ and $G\circleddash H_2$ are $L$-cospectral.
\end{itemize}
\end{cor}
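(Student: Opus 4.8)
The plan is to read off from the explicit factorization in Theorem \ref{CoronaTh5} exactly which invariants of the two factor graphs the Laplacian characteristic polynomial $\phi(L(G_1 \circleddash G_2);x)$ depends on, and then to verify that each such invariant is preserved under the stated cospectrality hypotheses. Inspecting the formula, the only data entering $\phi(L(G_1 \circleddash G_2);x)$ are the integers $n_1, m_1, r_1, n_2$ together with the Laplacian eigenvalues $\mu_1(G_1),\ldots,\mu_{n_1}(G_1)$ of the (regular) first factor and the eigenvalues $\mu_2(G_2),\ldots,\mu_{n_2}(G_2)$ of the second factor. Crucially, no further information about $G_2$ appears, because the computation in Theorem \ref{CoronaTh5} has already invoked (\ref{eq:GammaTL}), namely $\Gamma_{L(G_2)}(x-1)=n_2/(x-1)$, so that the $L$-coronal of the second factor depends only on its number of vertices $n_2$.

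For part (a), I would argue as follows. Let $G_1$ and $G_2$ be $L$-cospectral regular graphs, with $H$ in the role of the second factor. Being $L$-cospectral, $G_1$ and $G_2$ have the same number of vertices (the size of the Laplacian spectrum) and the same multiset of Laplacian eigenvalues. Since both are regular, their common degree $r$ is recovered from the spectrum via $nr=\sum_i \mu_i=\mathrm{tr}\,L$, and hence $m=nr/2$ as well; thus $n_1,m_1,r_1$ and $\{\mu_i(G_1)\}$ agree for the two graphs. Applying Theorem \ref{CoronaTh5} with second factor $H$, every quantity appearing in the formula is identical for $G_1\circleddash H$ and $G_2\circleddash H$, so their Laplacian characteristic polynomials coincide and the two graphs are $L$-cospectral.

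For part (b), let $G$ be regular and let $H_1,H_2$ be $L$-cospectral. The parameters of $G$ are identical in both products $G\circleddash H_1$ and $G\circleddash H_2$. Since $H_1$ and $H_2$ are $L$-cospectral they have the same number of vertices $n_2$ and the same Laplacian spectrum; in particular, after discarding the common eigenvalue $\mu_1=0$, the tuples $(\mu_2(H_1),\ldots,\mu_{n_2}(H_1))$ and $(\mu_2(H_2),\ldots,\mu_{n_2}(H_2))$ coincide. These are precisely the second-factor data entering Theorem \ref{CoronaTh5}, so the two characteristic polynomials are equal.

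The argument is routine once the formula of Theorem \ref{CoronaTh5} is in hand; the only point deserving comment is the contrast with the $A$- and $Q$-analogues (Corollaries \ref{ACorRegular11}(b) and \ref{ACorRegular8}(b)), where an extra hypothesis $\Gamma_{A(H_1)}(x)=\Gamma_{A(H_2)}(x)$ (respectively for $Q$) is imposed. Here no such assumption is needed: by (\ref{eq:GammaTL}) the relevant coronal is always $n_2/x$, which is already determined by the number of vertices and is therefore automatically equal for $L$-cospectral second factors. I expect no genuine obstacle; the only mild care required is in justifying that regularity together with $L$-cospectrality pins down $r_1$ and $m_1$ in part (a).
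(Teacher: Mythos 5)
Your proof is correct and follows exactly the route the paper intends: the corollary is stated as an immediate consequence of Theorem \ref{CoronaTh5}, whose formula depends only on $n_1,m_1,r_1$, the Laplacian spectrum of $G_1$, and $n_2$ together with the nonzero Laplacian eigenvalues of $G_2$, all of which are preserved under the stated hypotheses. Your remark on why no coronal hypothesis is needed here (in contrast to the $A$- and $Q$-versions) correctly identifies the role of (\ref{eq:GammaTL}).
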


\subsection{$Q$-spectra of subdivision-edge coronae}
\label{sec:SECoronaQ}

\begin{thm}\label{CoronaTh6}
Let $G_1$ be an $r_1$-regular graph on $n_1$ vertices and $m_1$ edges, and $G_2$ an arbitrary graph on $n_2$ vertices. Then
\begin{eqnarray*}
\begin{aligned}
\phi\left(Q(G_1\circleddash G_2);x\right)
&=\big(x-2-n_2-\Gamma_{Q(G_2)}(x-1)\big)^{m_1-n_1}\cdot\prod_{i=1}^{n_2}\big(x-1-\nu_i(G_2)\big)^{m_1}\\
&\quad\cdot\prod_{i=1}^{n_1}\Big(x^2-(2+r_1+n_2+\Gamma_{Q(G_2)}(x-1))x+r_1(2+n_2+\Gamma_{Q(G_2)}(x-1))-\nu_i(G_1)\Big).
\end{aligned}
\end{eqnarray*}
\end{thm}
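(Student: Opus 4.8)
The plan is to follow exactly the template established in the proofs of Theorems \ref{CoronaTh4} and \ref{CoronaTh5}, since $G_1\circleddash G_2$ has the same underlying block structure for all three matrices and only the diagonal blocks change. First I would write down the signless Laplacian $Q(G_1\circleddash G_2)$ with respect to the partition $V(G_1)\cup I(G_1)\cup[W_1\cup\cdots\cup W_{n_2}]$. Using the vertex degrees recorded before Section \ref{sec:SECoronaA} (namely $d(v_i)=r_1$, $d(e_i)=2+n_2$, and $d(u_j^i)=d_{G_2}(u_j)+1$), and recalling that $Q=D+A$ so every off-diagonal incidence block keeps its sign, I expect
\[
Q(G_1\circleddash G_2)=\bmat{
 r_1 I_{n_1} & R & 0_{n_1\times m_1 n_2}\\[0.2cm]
 R^T & (2+n_2)I_{m_1} & \mathbf{1}_{n_2}^T\otimes I_{m_1}\\[0.2cm]
 0_{m_1 n_2\times n_1} & \mathbf{1}_{n_2}\otimes I_{m_1} & (Q(G_2)+I_{n_2})\otimes I_{m_1}
}.
\]

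Next I would form $\phi(Q(G_1\circleddash G_2))=\det(xI-Q)$ and peel off the bottom-right block $((x-1)I_{n_2}-Q(G_2))\otimes I_{m_1}$ by a Schur-complement step, exactly as in Theorem \ref{CoronaTh5}. Its determinant factors as $\prod_{i=1}^{n_2}(x-1-\nu_i(G_2))^{m_1}$, which supplies the middle product in the claimed formula. The Schur complement of this block is
\[
S=\pmat{
 (x-r_1)I_{n_1} & -R\\[0.2cm]
 -R^T & \big(x-2-n_2-\Gamma_{Q(G_2)}(x-1)\big)I_{m_1}
},
\]
where the shift by $-1$ in the $Q(G_2)$-coronal arises because the reduced bottom block is $(x-1)I_{n_2}-Q(G_2)$ rather than $xI_{n_2}-Q(G_2)$, and the off-diagonal all-ones vectors contract through $(\,(x-1)I_{n_2}-Q(G_2)\,)^{-1}$ to produce $\mathbf{1}_{n_2}^T(\,(x-1)I-Q(G_2)\,)^{-1}\mathbf{1}_{n_2}=\Gamma_{Q(G_2)}(x-1)$ on each of the $m_1$ diagonal entries.

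The remaining work is to evaluate $\det(S)$. I would take out the $(x-r_1)I_{n_1}$ block and reduce to $\det\big((x-2-n_2-\Gamma_{Q(G_2)}(x-1))I_{m_1}-\tfrac{1}{x-r_1}R^TR\big)$, then invoke $R^TR=A(\mathcal{L}(G_1))+2I_{m_1}$ together with the line-graph eigenvalues $\lambda_i(G_1)+r_1-2$ (for $i=1,\ldots,n_1$) and $-2$ (with multiplicity $m_1-n_1$), precisely as cited from \cite[Theorem 2.4.1]{kn:Cvetkovic10} in Theorem \ref{CoronaTh4}. The $-2$ eigenvalues contribute the factor $\big(x-2-n_2-\Gamma_{Q(G_2)}(x-1)\big)^{m_1-n_1}$, and the $n_1$ genuine line-graph eigenvalues give the product over $i$; finally substituting $\nu_i(G_1)=\lambda_i(G_1)+r_1$ (valid since $G_1$ is $r_1$-regular, so $Q(G_1)=r_1 I+A(G_1)$) converts $\lambda_i(G_1)+r_1$ into $\nu_i(G_1)$ and yields the stated quadratic-in-$x$ factors. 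The only point demanding care—the step I'd flag as the main obstacle—is the bookkeeping when the $m_1-n_1$ copies of the eigenvalue $-2$ split off cleanly: one must check that the term $-\tfrac{(-2)+2}{x-r_1}=0$ collapses the corresponding factors to exactly $\big(x-2-n_2-\Gamma_{Q(G_2)}(x-1)\big)$, matching the boundary factor in the theorem, so that no spurious $(x-r_1)$ powers survive in the final expression. Everything else is routine algebra parallel to the $L$-spectrum computation.
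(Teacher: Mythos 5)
Your proposal is correct and follows essentially the same route as the paper, which itself proves Theorem \ref{CoronaTh6} by writing down exactly the block matrix you give and then "refining the arguments used to prove Theorem \ref{CoronaTh5}" (Schur complement against the $((x-1)I_{n_2}-Q(G_2))\otimes I_{m_1}$ block, then $R^TR=A(\mathcal{L}(G_1))+2I_{m_1}$ with the line-graph eigenvalues, and $\nu_i(G_1)=\lambda_i(G_1)+r_1$). Your flagged concern resolves as you expect: the prefactor $(x-r_1)^{n_1}$ is absorbed exactly by the $n_1$ quadratic factors, and the $m_1-n_1$ eigenvalues $-2$ contribute the clean factor $\big(x-2-n_2-\Gamma_{Q(G_2)}(x-1)\big)^{m_1-n_1}$ with no leftover powers of $x-r_1$.
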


\begin{proof}
Let $R$ be the incidence matrix \cite{kn:Brouwer12} of $G_1$. Then the signless Laplacian matrix of $G_1 \circleddash G_2$ can be written as
\[Q(G_1 \circleddash G_2)=\bmat{
                          r_1I_{n_1}   &   R    &  0_{n_1\times m_1n_2}   \\[0.2cm]
                          R^T    &   (2+n_2)I_{m_1}   &   \mathbf{1}_{n_2}^T\otimes I_{m_1}\\[0.2cm]
                          0_{m_1n_2\times n_1}   &   \mathbf{1}_{n_2}\otimes I_{m_1}  &  ( Q(G_2)+I_{n_2})\otimes I_{m_1}
                       }.\]
The result follows by refining the arguments used to prove Theorem \ref{CoronaTh5}.\qed
\end{proof}

By applying (\ref{eq:GammaT}) again, Theorem \ref{CoronaTh6} implies the following result.

\begin{cor}\label{ACorRegular15}
Let $G_1$ be an $r_1$-regular graph on $n_1$ vertices and $m_1$ edges, and $G_2$ an $r_2$-regular graph on $n_2$ vertices. Then the $Q$-spectrum of $G_1\circleddash G_2$ consists of:
\begin{itemize}
  \item[\rm (a)] $\nu_i(G_2)+1$, repeated $m_1$ times, for each $i=1,2,\ldots,n_2-1$;
  \item[\rm (b)] three roots of the equation
\[x^3-ax^2+bx+c=0\]
for each $j=1, 2, \ldots, n_1$, where $a=3+r_1+2r_2+n_2, b=2+r_1n_2+2r_1r_2+2r_2n_2+3r_1+4r_2-\nu_j(G_1), c=-2r_1-4r_1r_2-2r_1r_2n_2+\nu_j(G_1)+2r_2\nu_j(G_1)$;
  \item[\rm (c)] two roots of the equation
\[x^2-(3+2r_2+n_2)x+2(1+2r_2+r_2n_2)=0,\]
each root repeated $ m_1-n_1$ times.
\end{itemize}
\end{cor}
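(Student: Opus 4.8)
The plan is to mirror the proof of Theorem \ref{CoronaTh5} almost verbatim, since the signless Laplacian $Q$ differs from the Laplacian $L$ only in the signs of the off-diagonal blocks. First I would record the signless Laplacian matrix $Q(G_1\circleddash G_2)$ with respect to the partition $V(G_1)\cup I(G_1)\cup[W_1\cup\cdots\cup W_{n_2}]$; this is already supplied in the statement's proof above, with $r_1 I_{n_1}$, $(2+n_2)I_{m_1}$, and $(Q(G_2)+I_{n_2})\otimes I_{m_1}$ on the diagonal and $+R$, $+\mathbf{1}_{n_2}^T\otimes I_{m_1}$ in the off-diagonal positions. The diagonal entries are exactly the vertex degrees computed in Section \ref{SEC:spec}: $d_{G_1\circleddash G_2}(v_i)=r_1$, $d_{G_1\circleddash G_2}(e_i)=2+n_2$, and $d_{G_1\circleddash G_2}(u_j^i)=d_{G_2}(u_j)+1$, which justifies the block $Q(G_2)+I_{n_2}$.

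Next I would write $\phi(Q(G_1\circleddash G_2);x)=\det(xI-Q(G_1\circleddash G_2))$ and factor out the bottom-right block by a Schur-complement argument. The block $((x-1)I_{n_2}-Q(G_2))\otimes I_{m_1}$ is invertible as a rational function of $x$, and its determinant is $\prod_{i=1}^{n_2}(x-1-\nu_i(G_2))^{m_1}$ by the Kronecker-product determinant rule. Eliminating the third block row and column produces the Schur complement
\begin{eqnarray*}
S=\pmat{(x-r_1)I_{n_1} & R \\[0.2cm] R^T & \big(x-2-n_2-\Gamma_{Q(G_2)}(x-1)\big)I_{m_1}},
\end{eqnarray*}
where the scalar shift $\Gamma_{Q(G_2)}(x-1)$ arises exactly as in Theorem \ref{CoronaTh5} from $(\mathbf{1}_{n_2}^T\otimes I_{m_1})((x-1)I_{n_2}-Q(G_2))^{-1}\otimes I_{m_1}(\mathbf{1}_{n_2}\otimes I_{m_1})=\Gamma_{Q(G_2)}(x-1)I_{m_1}$, using the definition of the $M$-coronal and the mixed-product property of $\otimes$.

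Then I would evaluate $\det(S)$ by taking a second Schur complement with respect to the invertible block $(x-r_1)I_{n_1}$, giving $\det(S)=(x-r_1)^{n_1}\det\big((x-2-n_2-\Gamma_{Q(G_2)}(x-1))I_{m_1}-\tfrac{1}{x-r_1}R^TR\big)$. The key input is the identity $R^TR=A(\mathcal{L}(G_1))+2I_{m_1}$ together with the spectrum of the line graph recalled in the proof of Theorem \ref{CoronaTh4}: the eigenvalues of $\mathcal{L}(G_1)$ are $\lambda_i(G_1)+r_1-2$ for $i=1,\ldots,n_1$ and $-2$ with multiplicity $m_1-n_1$. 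Substituting these, the $m_1-n_1$ copies of $-2$ collapse the shift term to zero and yield the factor $(x-2-n_2-\Gamma_{Q(G_2)}(x-1))^{m_1-n_1}$, while the remaining $n_1$ eigenvalues give the product $\prod_{i=1}^{n_1}(x^2-(2+r_1+n_2+\Gamma_{Q(G_2)}(x-1))x+r_1(2+n_2+\Gamma_{Q(G_2)}(x-1))-\nu_i(G_1))$ after substituting $\nu_i(G_1)=r_1+\lambda_i(G_1)$, the relation for a regular graph. I do not expect any genuine obstacle here; the only point demanding care is bookkeeping the signs, since replacing $L$ by $Q$ flips the off-diagonal signs but leaves every determinant unchanged (the sign flips enter only through $RR^T$-type products, which are sign-insensitive), so the coronal and line-graph computations go through identically.
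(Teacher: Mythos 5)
There is a genuine gap: what you have written is a proof of Theorem \ref{CoronaTh6} (the general formula for $\phi(Q(G_1\circleddash G_2);x)$ with the coronal $\Gamma_{Q(G_2)}(x-1)$ left symbolic), which is essentially how the paper obtains that theorem, namely by mirroring the proof of Theorem \ref{CoronaTh5}. But the statement you were asked to prove is the corollary for \emph{regular} $G_2$, and the paper derives it from Theorem \ref{CoronaTh6} by one further step that your proposal omits entirely: since every row sum of $Q(G_2)$ equals $2r_2$, equation (\ref{eq:GammaT}) gives $\Gamma_{Q(G_2)}(x-1)=n_2/(x-1-2r_2)$, and this must be substituted and the denominators cleared to produce the explicit quadratic in (c) and the cubic coefficients $a,b,c$ in (b). Your final expression still contains $\Gamma_{Q(G_2)}(x-1)$ and the full product $\prod_{i=1}^{n_2}\big(x-1-\nu_i(G_2)\big)^{m_1}$, so it is not yet the corollary.

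The omission is not purely clerical, because the substitution forces a cancellation that changes the stated spectrum. Clearing the denominator $x-1-2r_2$ from the factor $\big(x-2-n_2-\Gamma_{Q(G_2)}(x-1)\big)^{m_1-n_1}$ costs $m_1-n_1$ copies of $(x-1-2r_2)$, and clearing it from each of the $n_1$ quadratic factors costs $n_1$ more; these $m_1$ copies are exactly absorbed from $\big(x-1-\nu_{n_2}(G_2)\big)^{m_1}=(x-1-2r_2)^{m_1}$, since $\nu_{n_2}(G_2)=2r_2$ for an $r_2$-regular graph. This is precisely why part (a) of the corollary runs only over $i=1,\ldots,n_2-1$ rather than $i=1,\ldots,n_2$. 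Without carrying out this step you would either leave rational functions in the answer or overcount the eigenvalue $2r_2+1$ by $m_1$. A complete proof needs the substitution, the bookkeeping of this cancellation, and the (routine but necessary) expansion verifying
\begin{equation*}
\Big(x-2-n_2-\tfrac{n_2}{x-1-2r_2}\Big)(x-1-2r_2)=x^2-(3+2r_2+n_2)x+2(1+2r_2+r_2n_2)
\end{equation*}
together with the analogous expansion producing $a$, $b$, $c$ after inserting $\nu_i(G_1)=r_1+\lambda_i(G_1)$.
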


%\begin{proof}
%(a) Since $G_2$ is $r_2$-regular graph on $n_2$ vertices, by (\ref{eq:GammaT}) we have
%$$
%\Gamma_{Q(G_2)}(x-1) = \frac{n_2}{x-1-2r_2}.
%$$
%The only pole of $\Gamma_{Q(G_2)}(x-1)$ is $x-1=2r_2$, which is equal to $\nu_{n_2}(G_2)$. Thus, by Theorem \ref{CoronaTh6}, for each $i=1,2,,\ldots,n_2-1$, $\nu_i(G_2)+1$ is an eigenvalue of $G_1\circleddash G_2$ repeated $m_1$ times.

%(b) We can obtain $3n_1$ eigenvalues of $G_1\circleddash G_2$ by solving
%\[x^2-(2+r_1+n_2+\frac{n_2}{x-1-2r_2})x+r_1(2+n_2+\frac{n_2}{x-1-2r_2})-\nu_j(G_1)=0\]
%for each $j=1,2,\ldots,n_1$.

%(c) The remaining  eigenvalues of $G_1\circleddash G_2$ are obtained by solving
%\[x-2-n_2-\frac{n_2}{x-1-2r_2}=0\]
%for each repeated $m_1-n_1$ times.
%\qed
%\end{proof}

Finally, Theorem \ref{CoronaTh6} enables us to construct infinitely many pairs of $Q$-cospectral graphs.

\begin{cor}\label{ACorRegular16}
\begin{itemize}
  \item[\rm (a)] If $G_1$ and $G_2$ are $Q$-cospectral regular graphs, and $H$ is an arbitrary graph, then $G_1\circleddash H$ and $G_2\circleddash H$ are $Q$-cospectral.
  \item[\rm (b)] If G is a regular graph, and $H_1$ and $H_2$ are $Q$-cospectral graphs with $\Gamma_{Q(H_1)}(x)=\Gamma_{Q(H_2)}(x)$, then $G\circleddash H_1$ and $G\circleddash H_2$ are $Q$-cospectral.
\end{itemize}
\end{cor}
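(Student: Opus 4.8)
The plan is to read both claims directly off the factored expression for $\phi\left(Q(G_1\circleddash G_2);x\right)$ supplied by Theorem~\ref{CoronaTh6}. The strategy in each case is to identify precisely which of the quantities $n_1,m_1,r_1,n_2$, the multiset $\{\nu_i(G_1)\}$, the multiset $\{\nu_i(G_2)\}$, and the coronal $\Gamma_{Q(G_2)}(x-1)$ actually varies as we pass from one operand to the other, and then to check that the hypotheses force every varying quantity occurring in the formula to agree. Since $\phi(Q(\cdot);x)$ is a product of three factors, equality of all the relevant inputs yields equality of the two characteristic polynomials, which is exactly $Q$-cospectrality.

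For part~(a), the two $Q$-cospectral regular graphs play the role of ``$G_1$'' in Theorem~\ref{CoronaTh6}, while the fixed arbitrary graph $H$ plays the role of ``$G_2$''. First I would note that $Q$-cospectrality pins down the common order $n_1$ (the number of eigenvalues) and the common trace $\sum_i\nu_i=\mathrm{tr}\,Q=n_1r_1$; for regular graphs this forces the two operands to share the degree $r_1=\mathrm{tr}\,Q/n_1$, and hence the edge count $m_1=n_1r_1/2$. Consequently $n_1,m_1,r_1$ together with the data of $H$, namely $n_2$, $\{\nu_i(H)\}$ and $\Gamma_{Q(H)}(x-1)$, are identical for the two coronae. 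The only remaining place where the operand enters the formula is the factor $\prod_{i=1}^{n_1}\big(\cdots-\nu_i(G_1)\big)$, and since the two regular graphs are $Q$-cospectral their multisets $\{\nu_i\}$ coincide, so this last product is unchanged. Hence the whole polynomial is unchanged. Note that no hypothesis on coronals is needed here, because the varying operand never appears through its own coronal.

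For part~(b) the roles are reversed: the fixed regular graph $G$ is ``$G_1$'', while $H_1,H_2$ are the two choices of ``$G_2$''. Now the varying operand enters in three ways: through its order $n_2$; through the multiset $\{\nu_i(H_j)\}$ in $\prod_{i=1}^{n_2}\big(x-1-\nu_i\big)^{m_1}$; and through the coronal $\Gamma_{Q(H_j)}(x-1)$, which appears both in the exponent-$(m_1-n_1)$ factor and in the final $\prod_{i=1}^{n_1}$ factor. $Q$-cospectrality of $H_1,H_2$ supplies the first two agreements ($n_2$ equal and $\{\nu_i\}$ equal), while the explicit assumption $\Gamma_{Q(H_1)}=\Gamma_{Q(H_2)}$ supplies the third. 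With $G$ fixed (so $r_1,n_1,m_1,\{\nu_i(G)\}$ are common), all three factors of Theorem~\ref{CoronaTh6} coincide, giving $Q$-cospectrality of $G\circleddash H_1$ and $G\circleddash H_2$.

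The proof involves no real computational obstacle; the only genuinely conceptual point---and the reason part~(b) carries an extra hypothesis that part~(a) does not---is that the coronal $\Gamma_{Q(H)}(x)=\mathbf{1}_{n_2}^T(xI_{n_2}-Q(H))^{-1}\mathbf{1}_{n_2}$ is \emph{not} determined by the $Q$-spectrum alone: expanding in an eigenbasis gives $\Gamma_{Q(H)}(x)=\sum_\nu \big(\mathbf{1}_{n_2}^TP_\nu\mathbf{1}_{n_2}\big)/(x-\nu)$, whose residues record the projections of the all-ones vector onto the eigenspaces, data invisible to the spectrum. Thus I would invoke the coronal hypothesis explicitly in~(b), whereas in~(a) it can be dispensed with precisely because the graph being varied enters Theorem~\ref{CoronaTh6} only through its spectrum and never through its coronal.
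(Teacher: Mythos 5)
Your proposal is correct and follows exactly the route the paper intends: the paper states Corollary~\ref{ACorRegular16} as an immediate consequence of Theorem~\ref{CoronaTh6} without writing out the verification, and your argument supplies precisely that verification (reading off which inputs of the formula vary and checking they are forced to agree by the hypotheses). Your added observation that the coronal is not a spectral invariant, which is why part~(b) needs the extra hypothesis while part~(a) does not, is accurate and consistent with how the paper treats the analogous Corollaries~\ref{ACorRegular3}, \ref{ACorRegular8} and \ref{ACorRegular11}.
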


\medskip

\noindent \textbf{Acknowledgements}~~The authors would like to thank Dr. Xiaogang Liu for his suggestion of constructing infinite families of $A$-integral graphs by using the subdivision-vertex and subdivision-edge coronae. P. Lu is supported by the Natural Science Foundation of Gansu Province (No.1212RJZA029).

\end{document}